\documentclass[a4paper,11pt]{article}

\usepackage[margin=25mm]{geometry}
\usepackage{amsmath,amssymb,amsthm,mathtools,bm}
\usepackage{booktabs,array}
\usepackage{graphicx}
\usepackage{hyperref}
\usepackage{enumitem}
\allowdisplaybreaks[3]

\hypersetup{
  colorlinks=true,
  linkcolor=blue,
  citecolor=blue,
  urlcolor=blue,
  pdftitle={A Structure-Preserving Stagewise Rescaling Algorithm for a Two-Dimensional Nonlocal MEMS Equation in an Asymptotically Constant-Feedback Regime},
  pdfauthor={Takiko Sasaki and Tetsuji Tokihiro}
}

\theoremstyle{plain}
\newtheorem{theorem}{Theorem}[section]
\newtheorem{proposition}[theorem]{Proposition}
\newtheorem{lemma}[theorem]{Lemma}

\newtheorem{assumption}[theorem]{Assumption}

\theoremstyle{definition}
\newtheorem{definition}[theorem]{Definition}
\newtheorem{remark}[theorem]{Remark}

\numberwithin{equation}{section}

\title{A Structure-Preserving Stagewise Rescaling Algorithm\\
for a Two-Dimensional Nonlocal MEMS Equation\\
in an Asymptotically Constant-Feedback Regime}
\author{%
Takiko Sasaki$^{1}$ and Tetsuji Tokihiro$^{1}$\\[1ex]
\small $^{1}$Department of Mathematical Engineering, Faculty of Engineering, Musashino University\\
\small Corresponding author: Takiko Sasaki, \texttt{t-sasaki@musashino-u.ac.jp}%
}
\date{}

\begin{document}
\maketitle

\begin{abstract}
Nonlocal MEMS equations exhibit finite-time quenching phenomena that pose significant challenges for numerical simulation. In this paper, we study a stagewise rescaling algorithm for a two-dimensional nonlocal MEMS equation in an asymptotically constant-feedback touchdown regime. The original nonlocal equation is not exactly invariant under the $A^{3/2}$--$A^3$ scaling used below; rather, the scaling is appropriate when the reciprocal-integral feedback
\[
K(t)=1+\int_\Omega (1-u(x,t))^{-1}\,dx
\]
remains bounded and converges to a finite positive limit, as in the single-point touchdown profiles constructed by Duong--Zaag. In this regime the leading-order core balance is that of a local MEMS equation with an asymptotically constant coefficient. By applying a fixed-stage scaling to the deficit variable, we transform the equation into a gradient flow for a rescaled energy at a frozen amplitude and obtain an exact energy dissipation identity within each stage. We then introduce a minimizing-movement stage solver and obtain a discrete energy inequality at the fixed-stage level. Because strict energy monotonicity is not expected at stage transitions, we isolate a switch defect and an outer-update defect to prove an exact defect balance. Conditional on a uniform switch-defect estimate, this balance implies quantitative almost monotonicity. We also formulate a defect-aware criterion for the nonexistence of a global admissible continuation. Finally, we reorganize the numerical section around reproducible two-dimensional reference computations: a full-domain stagewise run illustrating trigger detection, fixed-stage energy decay, and geometric accumulation of physical time, and a direct fixed-domain energy check. These computations are not used as a proof of the bounded-window criterion; they include a finite-feedback diagnostic table and identify the additional ideal-transfer switch-energy diagnostics needed for a posteriori verification.
\end{abstract}

\medskip\noindent
\textbf{Keywords.}
nonlocal MEMS equation; stagewise rescaling; asymptotically constant feedback; quenching; energy dissipation; minimizing movement; switch defect; continuation criterion

\section{Introduction}

Let $\Omega\subset \mathbb{R}^2$ be a bounded domain, and consider the two-dimensional nonlocal MEMS equation
\[
\begin{cases}
u_t-\Delta u=\displaystyle
\frac{\lambda}{(1-u)^2\left(1+\int_\Omega (1-u)^{-1}\,dx\right)^2},
& x\in\Omega,\ t>0,\\[1ex]
u=0,& x\in\partial\Omega,\\
0\le u(x,0)=u_0(x)<1,& x\in\Omega.
\end{cases}
\]
Nonlocal MEMS models with integral feedback of this type arise naturally in electrostatic device control; see, for example, Pelesko--Triolo \cite{PeleskoTriolo}, Guo--Hu--Wang \cite{GuoHuWang}, Guo--Kavallaris \cite{GuoKavallaris}, and the monograph of Kavallaris--Suzuki \cite{KavallarisSuzuki}. For the related local electrostatic MEMS evolution problem and its touchdown dynamics, see Ghoussoub--Guo \cite{GhoussoubGuoDynamic}. The approach of the solution toward the singular level $u=1$ is called \emph{quenching} or \emph{touchdown}; for the nonlocal problem above, global-vs-quenching behavior and finite-time quenching mechanisms have been studied in particular in \cite{GuoHuWang,GuoKavallaris,KavallarisSuzuki}.

Near the quenching core the natural variable is not $u$ but the deficit
\[
v:=1-u>0.
\]
In terms of $v$, the equation becomes
\[
v_t=\Delta v-\frac{\lambda}{v^2K(v)^2},
\qquad
K(v):=1+\int_\Omega v^{-1}\,dx,
\]
with boundary condition $v=1$ on $\partial\Omega$. The associated energy is
\[
E[v]=\frac12\int_\Omega |\nabla v|^2\,dx
+\frac{\lambda}{1+\int_\Omega v^{-1}\,dx},
\]
and any sufficiently smooth solution satisfies
\[
\frac{d}{dt}E[v(t)]=-\int_\Omega v_t(x,t)^2\,dx\le 0.
\]
This dissipation law is one of the main structural ingredients in the analytical study of the nonlocal MEMS equation; see \cite{GuoKavallaris,KavallarisSuzuki}.

Stagewise rescaling algorithms for numerical blow-up computations go back to Berger and Kohn \cite{BergerKohn}; a recent convergence analysis of that algorithm was given by Cho and Sun \cite{ChoSun}. On the structure-preserving discretization side, Matsuya and Tokihiro \cite{MatsuyaTokihiro} showed how a discrete scheme can be designed so as to retain blow-up/global-existence information for a semilinear heat equation. The present paper is motivated by bringing these two directions together for a nonlocal MEMS problem whose energy contains the reciprocal integral $\int_\Omega v^{-1}\,dx$.

A point that is essential for the present algorithm is that the original deficit equation is \emph{not} scale invariant. The transformation
\[
x-x_*=A^{3/2}\xi,\qquad t-t_*=A^3s,\qquad v=AW
\]
is instead tied to the asymptotically constant-feedback regime
\[
K(t):=1+\int_\Omega v(x,t)^{-1}\,dx\longrightarrow K_T\in(0,\infty)
\qquad \text{as } t\uparrow T.
\]
In that case the nonlocal coefficient $\lambda K(t)^{-2}$ converges to a positive constant, and the leading-order balance near an isolated touchdown core is the same as for the local equation $v_t=\Delta v-\alpha_Tv^{-2}$. This regime is realized by the single-point interior touchdown solutions constructed by Duong--Zaag \cite{DuongZaag} for the corresponding nonlocal MEMS model with a feedback parameter. Their final profile has the form
\[
1-u^*(x)\sim C\left(\frac{|x-a|^2}{|\log |x-a||}\right)^{1/3}
\qquad (x\to a),
\]
so, in two dimensions, $(1-u^*)^{-1}$ is locally integrable near the touchdown point. Thus the reciprocal integral can have a finite limiting contribution, and the feedback is asymptotically constant at leading order. The present paper should therefore be read as a structure-preserving numerical framework for this Duong--Zaag type regime, rather than as a scaling theory for every possible nonlocal touchdown scenario.

Both the theoretical development and the reference numerical experiments reported in this manuscript are written for the \emph{two-dimensional} problem. No one-dimensional numerical run is used as evidence for the results below. This separation keeps the reported computations aligned with the two-dimensional nonlocal scaling and with the constant-feedback scope described above.

Our goals are the following.
\begin{enumerate}[label=(\roman*)]
\item Make explicit the asymptotically constant-feedback regime in which the $A^{3/2}$--$A^3$ scaling is appropriate.
\item Preserve the exact fixed-stage energy dissipation under rescaling.
\item Derive a minimizing-movement discretization whose fully discrete fixed-stage energy is nonincreasing.
\item Isolate the loss of stage-to-stage monotonicity into a \emph{switch defect} and, for finite-window implementations, an \emph{outer-update defect}.
\item Formulate a defect-aware criterion that excludes the existence of a global admissible continuation under additional summability and growth assumptions.
\item Align the numerical section with the theory by reporting two-dimensional reference computations, including finite-feedback diagnostics, and by stating which switch-defect diagnostics remain absent.
\end{enumerate}

Compared with the classical Berger--Kohn setting \cite{BergerKohn}, the present equation contains a genuinely nonlocal coefficient, so the stage transition must control not only the local interpolation error but also the change of the reciprocal-integral term. This is why the paper is organized around three distinct levels:
\begin{itemize}
\item the fixed-stage gradient-flow structure;
\item the stage-transition defect balance; and
\item the bounded-window continuation criterion.
\end{itemize}
Within the stated asymptotically constant-feedback scope, the fixed-stage energy identity is unconditional, whereas the quantitative switch-defect estimate remains conditional in the current manuscript.

The main new points of the paper are as follows. First, we separate the scope of the scaling from the structure-preserving discretization: the $A^{3/2}$--$A^3$ scaling is motivated by the Duong--Zaag type constant-feedback core, while the fixed-stage energy identity is proved exactly for each frozen amplitude. Second, for frozen $A$ we write the rescaled equation exactly as a gradient flow for a rescaled energy and retain the corresponding discrete energy inequality under a minimizing-movement scheme. Third, at stage transitions we use a 12-point prolongation that is globally $C^0$ across coarse-cell interfaces and locally compatible with the centered-difference Laplacian away from interface-crossing stencils. Fourth, we separate the stage-jump error into an ideal switch defect and an outer-update defect, which leads to an exact defect balance and, under a conditional estimate of the switch defect, to quantitative almost monotonicity. Finally, in Section~5 we formulate a criterion for the \emph{nonexistence of a global admissible continuation} on uniformly bounded energy boxes. This is deliberately weaker than a finite-time quenching theorem, and we state it in that form to avoid conflating the discrete continuation argument with the continuous quenching theory.

The numerical section should also be read with this distinction in mind. The two-dimensional reference computations are full-domain computations performed on the growing rescaled square, so they illustrate fixed-stage energy decay and geometric time accumulation, but they do \emph{not} verify the bounded-box criterion of Section~5. Section~6 reports the reconstructed feedback diagnostics $K(t)$ and $\lambda K(t)^{-2}$ at the stage endpoints; these data show that the feedback remains finite over the reported stages, but four stages are not enough to prove convergence to an asymptotically constant-feedback regime. Moreover, the reference computations still do not record the ideal-transfer switch-energy diagnostics needed for an a posteriori measurement of the stage-transition defect.

\section{Fixed-Stage Rescaling and Exact Energy Dissipation}

The fixed-stage scaling used in this section should be understood with the scope described in the introduction. The deficit equation
\[
v_t=\Delta v-\frac{\lambda}{v^2K(v)^2},
\qquad
K(v)=1+\int_\Omega v^{-1}\,dx,
\]
is not invariant under the transformation used below: the domain, the boundary data, and the nonlocal factor all change. The scaling is instead based on a dominant-balance argument valid when the feedback is asymptotically constant.

More precisely, suppose that near an isolated touchdown point $x_*$ and time $T$ one has
\[
K(t):=1+\int_\Omega v(x,t)^{-1}\,dx\to K_T\in(0,\infty).
\]
Then
\[
\alpha(t):=\lambda K(t)^{-2}\to \alpha_T:=\lambda K_T^{-2}>0,
\]
and the leading-order core equation is
\[
v_t\simeq \Delta v-\alpha_Tv^{-2}.
\]
If $v\sim A$, the balances
\[
v_t\sim A/\tau,
\qquad
\Delta v\sim A/\ell^2,
\qquad
v^{-2}\sim A^{-2}
\]
give $\ell\sim A^{3/2}$ and $\tau\sim A^3$. This yields
\[
x-x_*=A^{3/2}\xi,\qquad t-t_*=A^3 s,\qquad v=AW.
\]
If instead $K(t)$ had a leading-order dependence on $A$, for instance $K(t)\sim A^{-\beta}$, then the natural time and space scales would generally be modified. The present algorithm therefore targets the asymptotically constant-feedback regime, not an arbitrary nonlocal touchdown scenario.

Throughout this section $A>0$, $x_*$, and $t_*$ are fixed and do not vary continuously within a stage.

\begin{definition}[Full-domain fixed-stage rescaling]
Define
\[
\Omega_A:=\frac{\Omega-x_*}{A^{3/2}},
\]
and
\[
W(\xi,s):=\frac{1}{A}v(x_*+A^{3/2}\xi,t_*+A^3 s),
\qquad (\xi,s)\in \Omega_A\times[0,S].
\]
\end{definition}

Under this change of variables,
\[
v_t=A^{-2}W_s,\qquad \Delta_x v=A^{-2}\Delta_\xi W,\qquad dx=A^3\,d\xi.
\]
Hence the rescaled equation becomes
\[
W_s=\Delta_\xi W-\frac{\lambda}{W^2K_A(W)^2},
\qquad
K_A(W):=1+A^2\int_{\Omega_A}W^{-1}\,d\xi.
\]
The boundary condition $v=1$ becomes
\[
W(\xi,s)=A^{-1},\qquad \xi\in \partial\Omega_A.
\]

We define the rescaled energy by
\[
E_A[W]:=\frac{A^2}{2}\int_{\Omega_A} |\nabla_\xi W|^2\,d\xi
+\frac{\lambda}{1+A^2\int_{\Omega_A}W^{-1}\,d\xi}.
\]

\begin{proposition}[Exact dissipation at a fixed stage]
If $W$ is a smooth solution of the rescaled equation, then
\[
\frac{d}{ds}E_A[W(s)]
=
-A^2\int_{\Omega_A}W_s(\xi,s)^2\,d\xi
\le 0.
\]
\end{proposition}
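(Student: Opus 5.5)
Since $A$, $x_*$, $t_*$ are frozen within the stage, $\Omega_A$ and the Dirichlet value $W=A^{-1}$ on $\partial\Omega_A$ are independent of $s$. The plan is to differentiate each of the two terms of $E_A[W(s)]$ in $s$ and then insert the rescaled equation.

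\emph{Gradient term.} Differentiating under the integral gives
\[
\frac{d}{ds}\frac{A^2}{2}\int_{\Omega_A}|\nabla W|^2\,d\xi
=A^2\int_{\Omega_A}\nabla W\cdot\nabla W_s\,d\xi .
\]
Integrating by parts, this equals
\[
-A^2\int_{\Omega_A}\Delta W\,W_s\,d\xi+A^2\int_{\partial\Omega_A}W_s\,\partial_\nu W\,dS .
\]
The boundary integral vanishes because $W_s=0$ on $\partial\Omega_A$, the boundary datum being time-independent within a stage.

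\emph{Nonlocal term.} Write $I(s)=\int_{\Omega_A}W^{-1}\,d\xi$. Then
\[
\frac{d}{ds}\frac{\lambda}{1+A^2I}=-\frac{\lambda A^2 I'(s)}{K_A^2},
\qquad I'(s)=-\int_{\Omega_A}W^{-2}W_s\,d\xi .
\]
Hence this term contributes
\[
A^2\int_{\Omega_A}\frac{\lambda}{W^2K_A(W)^2}\,W_s\,d\xi .
\]
Here $K_A(W(s))$ is a scalar depending only on $s$, so it may be pulled inside the spatial integral.

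\emph{Combining.} Adding the two contributions gives
\[
\frac{d}{ds}E_A=-A^2\int_{\Omega_A}\Bigl(\Delta W-\frac{\lambda}{W^2K_A^2}\Bigr)W_s\,d\xi .
\]
By the rescaled equation, the bracket equals $W_s$, so
\[
\frac{d}{ds}E_A=-A^2\int_{\Omega_A}W_s^2\,d\xi\le 0 .
\]
Equivalently, $E_A[W]=A^{-1}E[v]$ after the change of variables, and the identity follows from the original dissipation law together with $v_t=A^{-2}W_s$ and $dx=A^3d\xi$. This gives a consistency check but is not needed.

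\emph{Main obstacle.} The only delicate step is justifying differentiation under the integral and the integration by parts. This needs $W$ smooth up to $\partial\Omega_A$, with $W_s$ continuous, and $W\ge c>0$ on the stage interval, so that $W^{-1}$ and $W^{-2}W_s$ are integrable and $K_A$ is finite and positive. I would take these as part of the smoothness hypothesis, meaning smoothness of a solution that has not yet quenched on $[0,S]$. With that in place, dominated convergence handles the $s$-derivatives.
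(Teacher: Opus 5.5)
Your direct computation is correct. You differentiate the two terms of $E_A$ and integrate by parts. The boundary term drops because $W_s=0$ on $\partial\Omega_A$, since the Dirichlet value $A^{-1}$ is constant when $A$ is frozen. Inserting the rescaled equation then gives exactly $-A^2\int_{\Omega_A}W_s^2\,d\xi$.

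The paper argues differently. It observes that the change of variables preserves the energy exactly, $E[v(t_*+A^3s)]=E_A[W(s)]$. It then obtains the identity from the physical dissipation law $\frac{d}{dt}E[v]=-\int_\Omega v_t^2\,dx$, using the chain rule $\frac{d}{ds}=A^3\frac{d}{dt}$ together with $v_t=A^{-2}W_s$ and $dx=A^3\,d\xi$.

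The two routes buy different things:
\begin{itemize}
\item The paper's route is shorter and shows that a frozen-amplitude rescaling is energy-neutral. The same fact reappears in the appendix computation showing that the ideal stage switch is an exact change of variables for the energy. It does, however, import the physical dissipation law, which the paper only quotes from the literature.
\item Your route is self-contained. It exhibits the rescaled equation directly as the gradient flow $A^2W_s=-\delta E_A/\delta W$ in the weighted metric $A^2\int_{\Omega_A}\cdot\,d\xi$, which is precisely the structure the minimizing-movement scheme of Section~3 discretizes. It also shows exactly where freezing $A$ is used, namely in the vanishing boundary term.
\end{itemize}

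Your side remark contains an error: the energies are equal, $E_A[W]=E[v]$, not $A^{-1}E[v]$. Indeed, $|\nabla_x v|^2=A^{-1}|\nabla_\xi W|^2$ and $dx=A^3\,d\xi$ give $\frac12\int_\Omega|\nabla_x v|^2\,dx=\frac{A^2}{2}\int_{\Omega_A}|\nabla_\xi W|^2\,d\xi$. Likewise $\int_\Omega v^{-1}\,dx=A^2\int_{\Omega_A}W^{-1}\,d\xi$. With your factor $A^{-1}$, the check would produce $-A\int_{\Omega_A}W_s^2\,d\xi$ rather than the claimed identity. Your main argument does not use this remark, so the proof stands, but you should correct the remark or drop it.
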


\begin{proof}
The physical and rescaled energies satisfy
\[
E[v(t_*+A^3 s)]=E_A[W(s)].
\]
Using the energy dissipation law for the physical equation,
\[
\frac{d}{dt}E[v(t)]=-\int_\Omega v_t(x,t)^2\,dx,
\]
we obtain
\[
\frac{d}{ds}E_A[W(s)]
=
A^3\frac{d}{dt}E[v(t)]
=
-A^3\int_\Omega v_t(x,t)^2\,dx.
\]
Since $v_t=A^{-2}W_s$ and $dx=A^3\,d\xi$,
\[
A^3\int_\Omega v_t^2\,dx
=
A^3\int_{\Omega_A}A^{-4}W_s^2A^3\,d\xi
=
A^2\int_{\Omega_A}W_s^2\,d\xi.
\]
Substituting this into the previous identity proves the claim.
\end{proof}

\begin{remark}[No exact scale invariance]
The transformed coefficient
\[
K_A(W)=1+A^2\int_{\Omega_A}W^{-1}\,d\xi
\]
still contains the frozen amplitude $A$, and the boundary condition becomes $W=A^{-1}$ on $\partial\Omega_A$. Thus the transformation is not a symmetry of the original nonlocal problem. Its role is to magnify the touchdown core under the assumption that the nonlocal coefficient is asymptotically constant at leading order. If $A$ were allowed to vary continuously within a stage, transport and dilation terms generated by $\dot A$ would destroy the simple gradient-flow structure. Following the Berger--Kohn type stagewise strategy \cite{BergerKohn,ChoSun}, we therefore freeze $A$ inside each stage and rescale only at stage transitions.
\end{remark}

\section{A Fully Discrete Fixed-Stage Scheme with Energy Dissipation}

For the stagewise discrete scheme we use stage-dependent axis-aligned rectangular boxes
\[
Q_{\rm en}^{(m)}\Subset Q_{\rm int}^{(m)}\subset \mathbb{R}^2.
\]
In a bounded-window implementation these boxes may be independent of $m$. In the full-domain computations of Section~6 we take
\[
Q_{\rm en}^{(m)}=Q_{\rm int}^{(m)}=\Omega_{A_m},
\]
which is again a square for the unit-square example considered there.

The mesh convention is the following. Since
\[
A_{m+1}=k^{-2/3}A_m
\quad\Longrightarrow\quad
L_{m+1}=kL_m
\]
for the side length of the rescaled square, the full-domain stagewise implementation used later enlarges the number of grid intervals by the same factor and keeps the mesh width fixed:
\[
N_{m+1}=kN_m,\qquad h_{m+1}=h_m\equiv h.
\]
The alternative rule $h_{m+1}=h_m/k$ belongs to a different stage-transition discretization and is \emph{not} used in the present manuscript. To keep the notation compatible with the stage index, we nevertheless continue to write $h_m$, with the understanding that in the full-domain formulation $h_m=h$ for every $m$.

For stage $m$ we set
\[
Q_{{\rm en},h_m}:=Q_{\rm en}^{(m)}\cap (h_m\mathbb Z)^2,\qquad
Q_{{\rm int},h_m}:=Q_{\rm int}^{(m)}\cap (h_m\mathbb Z)^2,
\]
and denote by $Q_{{\rm en},h_m}^\circ$ and $\partial Q_{{\rm en},h_m}$ the interior and boundary grid points, respectively. We use the discrete inner product
\[
(Y,Z)_{2,h_m}:=h_m^2\sum_{Q_{{\rm en},h_m}^\circ}Y_{ij}Z_{ij},
\qquad
\|Y\|_{2,h_m}^2:=(Y,Y)_{2,h_m}.
\]

Let $g_m$ denote the prescribed stage-$m$ boundary values on $\partial Q_{{\rm en},h_m}$. In the full-domain fixed-stage setting one has $g_m\equiv A_m^{-1}$. Given an interior grid function $Y$, we write $Y^\flat$ for the extension to $Q_{{\rm en},h_m}$ obtained by setting $Y^\flat=Y$ on $Q_{{\rm en},h_m}^\circ$ and $Y^\flat=g_m$ on $\partial Q_{{\rm en},h_m}$.

We define the forward differences by
\[
D^{+x}_{h_m}Y_{ij}:=\frac{Y^\flat_{i+1,j}-Y^\flat_{ij}}{h_m},
\qquad
D^{+y}_{h_m}Y_{ij}:=\frac{Y^\flat_{i,j+1}-Y^\flat_{ij}}{h_m},
\]
on the index sets
\[
Q_{{\rm en},h_m}^x:=\{(i,j)\in Q_{{\rm en},h_m}\,;\, (i+1,j)\in Q_{{\rm en},h_m}\},
\]
\[
Q_{{\rm en},h_m}^y:=\{(i,j)\in Q_{{\rm en},h_m}\,;\, (i,j+1)\in Q_{{\rm en},h_m}\},
\]
and the discrete gradient norm by
\[
\|\nabla_{h_m}Y\|_{2,h_m}^2
:=
h_m^2\sum_{Q_{{\rm en},h_m}^x}|D^{+x}_{h_m}Y_{ij}|^2
+h_m^2\sum_{Q_{{\rm en},h_m}^y}|D^{+y}_{h_m}Y_{ij}|^2.
\]
The discrete Laplacian on $Q_{{\rm en},h_m}^\circ$ is the standard five-point stencil
\[
(\Delta_{h_m}Y)_{ij}
:=
\frac{
Y^\flat_{i+1,j}+Y^\flat_{i-1,j}+Y^\flat_{i,j+1}+Y^\flat_{i,j-1}-4Y^\flat_{ij}
}{h_m^2}.
\]
For test functions $\Phi$ vanishing on $\partial Q_{{\rm en},h_m}$, the discrete Green identity
\[
(-\Delta_{h_m}Y,\Phi)_{2,h_m}
=
(\nabla_{h_m}Y,\nabla_{h_m}\Phi)_{2,h_m}
\]
holds with the above definitions.

We define the discrete nonlocal term by the lower-semicontinuous extension
\[
K_{h_m,m}(Y):=
\begin{cases}
1+I_{out,m}^{h_m}
+A_m^2h_m^2\displaystyle\sum_{Q_{{\rm en},h_m}^\circ}Y_{ij}^{-1},
& \displaystyle \min_{Q_{{\rm en},h_m}^\circ}Y_{ij}>0,\\[2ex]
+\infty,
& \text{otherwise}.
\end{cases}
\]
and the discrete energy by
\[
E_{h_m,m}(Y):=
\frac{A_m^2}{2}\|\nabla_{h_m}Y\|_{2,h_m}^2
+\frac{\lambda}{K_{h_m,m}(Y)},
\]
with the convention that $\lambda/K_{h_m,m}(Y):=0$ on the vanishing branch $K_{h_m,m}(Y)=+\infty$. This is exactly the lower-semicontinuous extension of the reciprocal term as $Y_{ij}\downarrow 0$.

Here $I_{out,m}^{h_m}\ge 0$ approximates only the contribution of the nonlocal reciprocal integral from outside $Q_{\rm en}^{(m)}$. Boundary or interface effects in the Dirichlet part are not encoded in $I_{out,m}^{h_m}$; throughout the paper they are treated as part of the switch defect in Section~4. In the full-domain fixed-stage setting one may simply take $I_{out,m}^{h_m}=0$.

\paragraph{Standing positivity assumption.}
In Sections~3--5 we assume that the previous step on each stage satisfies
\[
Z_m^j\ge \eta_m>0
\]
and that the time step $\Delta s$ is sufficiently small. Because the minimizing set is the closed cone $\{Y\ge 0\}$, the previous convention ensures that the functional is well defined even when some component of $Y$ vanishes. The next lemma then shows that every minimizing-movement step on the admissible branch actually remains in the interior of the positive cone, so that the Euler--Lagrange equation is evaluated only in the usual strictly positive region.

\begin{lemma}[Positivity of the next-step minimizer]
Assume that the previous step satisfies $Z_{m,ij}^j\ge \eta_m>0$. If
\[
\Delta s<\Delta s_m^* :=
\min\left\{
\frac{A_m^2 h_m^2\eta_m^2}{8E_{h_m,m}(Z_m^j)},
\frac{\eta_m^3}{16\lambda}
\right\},
\]
then the functional
\[
J_m(Y):=
E_{h_m,m}(Y)+\frac{A_m^2}{2\Delta s}\|Y-Z_m^j\|_{2,h_m}^2
\]
admits a minimizer over the closed convex set $\{Y\ge 0\}$, and any such minimizer $Z_m^{j+1}$ satisfies
\[
Z_m^{j+1}\ge \eta_m/2>0.
\]
Consequently the Euler--Lagrange equation
\[
\frac{Z_{m,ij}^{j+1}-Z_{m,ij}^j}{\Delta s}
=
\Delta_{h_m}Z_{m,ij}^{j+1}
-\frac{\lambda}{(Z_{m,ij}^{j+1})^2K_{h_m,m}(Z_m^{j+1})^2}
\]
holds at each interior grid point.
\end{lemma}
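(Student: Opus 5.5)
The plan is to get existence from compactness, get positivity by comparing the minimizer with the previous step $Z_m^j$ used as a competitor, and then read off the Euler--Lagrange equation from first-order stationarity in the interior of the cone.

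\emph{Existence.} We work on the finite-dimensional space of interior grid functions, with the closed convex set $\{Y\ge 0\}$.
\begin{itemize}
\item The Dirichlet part $\frac{A_m^2}{2}\|\nabla_{h_m}Y\|_{2,h_m}^2$ is a continuous quadratic. Boundary values are fixed to $g_m$ through $Y^\flat$.
\item The reciprocal part $\lambda/K_{h_m,m}(Y)$ lies in $[0,\lambda]$. It is continuous on $\{Y\ge 0\}$: if some $Y_{ij}\downarrow 0$, then $K_{h_m,m}\to+\infty$ and the term tends to $0$, which is the value assigned on the vanishing branch. So the lower-semicontinuous extension is in fact continuous there.
\end{itemize}
Hence $J_m$ is continuous and bounded below by $0$ on $\{Y\ge0\}$. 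The proximal term $\frac{A_m^2}{2\Delta s}\|Y-Z_m^j\|_{2,h_m}^2$ makes $J_m$ coercive. Sublevel sets are therefore compact, and a minimizer exists.

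\emph{Positivity.} Let $Z_m^{j+1}$ be any minimizer. Testing with the admissible competitor $Y=Z_m^j$ and using $E_{h_m,m}\ge 0$ gives
\[
\frac{A_m^2}{2\Delta s}\|Z_m^{j+1}-Z_m^j\|_{2,h_m}^2\le J_m(Z_m^{j+1})\le J_m(Z_m^j)=E_{h_m,m}(Z_m^j).
\]
Retaining a single grid point in the discrete norm yields
\[
h_m^2\,|Z^{j+1}_{m,ij}-Z^j_{m,ij}|^2\le \frac{2\Delta s\,E_{h_m,m}(Z_m^j)}{A_m^2}.
\]
By the first entry of $\Delta s_m^*$, the right-hand side divided by $h_m^2$ is $<\eta_m^2/4$. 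Therefore $|Z^{j+1}_{m,ij}-Z^j_{m,ij}|<\eta_m/2$, and so $Z^{j+1}_{m,ij}>\eta_m-\eta_m/2=\eta_m/2$. If $E_{h_m,m}(Z_m^j)=0$, the same bound forces $Z_m^{j+1}=Z_m^j$ directly.

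The second constraint $\Delta s<\eta_m^3/(16\lambda)$ does not appear to be needed for this step. I would keep it as a safeguard: it gives a pointwise bound on the reaction increment $\Delta s\,\lambda Y^{-2}K^{-2}\le 4\lambda\Delta s/\eta_m^2<\eta_m/4$. If one prefers a maximum-principle-type argument via the Euler--Lagrange equation at a minimum point, this bound is what that argument would use.

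\emph{Euler--Lagrange equation.} Since $Z_m^{j+1}\ge\eta_m/2$, the minimizer lies in the open interior of the cone, where $J_m$ is smooth. Hence $\partial J_m/\partial Y_{ij}=0$ at every interior node. Compute each piece separately:
\begin{itemize}
\item By the discrete Green identity, the Dirichlet part contributes $-A_m^2h_m^2(\Delta_{h_m}Y)_{ij}$.
\item Since $\partial K_{h_m,m}/\partial Y_{ij}=-A_m^2h_m^2Y_{ij}^{-2}$, the reciprocal term contributes $+\lambda A_m^2h_m^2/(Y_{ij}^2K^2)$.
\item The proximal term contributes $A_m^2h_m^2(Y_{ij}-Z^j_{m,ij})/\Delta s$.
\end{itemize}
Dividing by $A_m^2h_m^2$ gives exactly the stated scheme.

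The main obstacle is the positivity step, specifically passing from the global $\ell^2$ bound to a pointwise bound. The factor $h_m^2$ lost there is exactly why $h_m^2$ appears in $\Delta s_m^*$. The rest is routine.
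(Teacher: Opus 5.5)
Your proposal is correct and follows essentially the same route as the paper's proof in Appendix~\ref{app:admissible}. The paper obtains existence from coercivity and finite dimensionality. It proves positivity by testing with $Y=Z_m^j$ and applying the inverse inequality $\|R\|_{\ell^\infty}\le h_m^{-1}\|R\|_{2,h_m}$, which is your single-grid-point argument. It then derives the Euler--Lagrange equation from the first variation and the discrete Green identity. Your remark about the second entry of $\Delta s_m^*$ also matches the paper: it uses $\Delta s<\eta_m^3/(16\lambda)$ only for the Hessian-based local uniqueness, not for positivity.
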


\begin{proof}
This is precisely the argument given in Appendix~\ref{app:admissible}, with the present notation for $h_m$ and the discrete operators. The quadratic distance term yields coercivity, and the minimality inequality yields an $\ell^\infty$ control via the discrete $L^2$ norm. For sufficiently small $\Delta s$, the minimizer therefore remains close to the previous step and hence stays in the positive region. Once interior positivity is known, the Euler--Lagrange equation follows from the first variation together with the discrete Green identity above.
\end{proof}

\begin{definition}[Minimizing-movement stage solver]
Given $Z_m^j$, define the next step by
\[
Z_m^{j+1}\in
\arg\min_{Y\ge 0}
\left\{
E_{h_m,m}(Y)+\frac{A_m^2}{2\Delta s}\|Y-Z_m^j\|_{2,h_m}^2
\right\}.
\]
Under the restriction $\Delta s<\Delta s_m^*$, we call this minimizer the \emph{admissible branch}.
\end{definition}

\begin{proposition}[Discrete energy dissipation at a fixed stage]
The minimizing-movement sequence satisfies
\[
E_{h_m,m}(Z_m^{j+1})
+\frac{A_m^2}{2\Delta s}\|Z_m^{j+1}-Z_m^j\|_{2,h_m}^2
\le
E_{h_m,m}(Z_m^j).
\]
In particular, the discrete energy is nonincreasing within each fixed stage.
\end{proposition}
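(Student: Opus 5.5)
The plan is the standard minimizing-movement comparison: test the minimality of $Z_m^{j+1}$ against the previous iterate $Z_m^j$ itself. By the Positivity Lemma, under the standing assumption $Z_m^j\ge\eta_m>0$ and $\Delta s<\Delta s_m^*$, the functional
\[
J_m(Y)=E_{h_m,m}(Y)+\frac{A_m^2}{2\Delta s}\|Y-Z_m^j\|_{2,h_m}^2
\]
attains its minimum over the closed cone $\{Y\ge 0\}$ at $Z_m^{j+1}$. Since $Z_m^j\ge\eta_m>0$, the competitor $Y=Z_m^j$ lies in this cone. Comparing the two values gives
\[
E_{h_m,m}(Z_m^{j+1})+\frac{A_m^2}{2\Delta s}\|Z_m^{j+1}-Z_m^j\|_{2,h_m}^2
=J_m(Z_m^{j+1})\le J_m(Z_m^j)=E_{h_m,m}(Z_m^j),
\]
because the distance term vanishes at $Y=Z_m^j$. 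This is exactly the claimed inequality.

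Two points must be checked for the comparison to be meaningful. First, every quantity must be finite. At $Z_m^j$ we have $\min Z_m^j>0$, so $K_{h_m,m}(Z_m^j)$ is finite and at least $1$, and the energy is a finite real number. At the minimizer the lower-semicontinuous convention makes $\lambda/K_{h_m,m}\in[0,\lambda]$ in all cases, so there is no ambiguity. The Positivity Lemma in any case places us on the branch with finite $K$. Second, the boundary values $g_m$ enter both competitors through the same extension $Y^\flat$, so the gradient term is evaluated consistently.

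Monotonicity of $j\mapsto E_{h_m,m}(Z_m^j)$ then follows by dropping the nonnegative distance term. To iterate, the hypotheses must propagate from step to step. The lemma only yields $Z_m^{j+1}\ge\eta_m/2$, so either the standing assumption is read as holding at every step of the stage, which is how Sections~3--5 phrase it, or the time step is chosen per step as $\Delta s<\Delta s_m^*$ with the current lower bound. Along the stage the energy is nonincreasing, so the threshold $A_m^2h_m^2\eta^2/(8E)$ does not deteriorate through the energy factor. The bookkeeping of the positivity constant is the only delicate point; I would handle it by invoking the standing assumption at each step rather than proving a uniform bound.

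The inequality itself needs no convexity and no use of the Euler--Lagrange equation. The only genuine obstacle is existence of the minimizer and admissibility of the competitor, and both are supplied by the Positivity Lemma (Appendix~\ref{app:admissible}).
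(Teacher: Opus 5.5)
Your proposal is correct and is the paper's argument: the paper's proof is the single observation that $Y=Z_m^j$ is an admissible competitor, so $J_m(Z_m^{j+1})\le J_m(Z_m^j)=E_{h_m,m}(Z_m^j)$. Your extra checks are consistent with the paper's setup and only make explicit what it leaves implicit: finiteness of the energy under the lower-semicontinuous convention, and propagating positivity from step to step through the standing assumption.
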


\begin{proof}
It suffices to use $Y=Z_m^j$ as a competitor in the minimizing problem.
\end{proof}

\section{Stage Transitions and a 12-Point Laplace-Compatible Interpolation}

At each stage transition we set
\[
A_{m+1}=k^{-2/3}A_m,\qquad h_{m+1}=h_m\equiv h,
\]
and seek a discrete transfer corresponding to
\[
W_{m+1}(\xi,0)=k^{2/3}W_m(\xi/k,s_m^*).
\]
In the full-domain formulation the rescaled box expands by the factor $k$, while the mesh width is kept fixed. Hence the new-stage grid contains $k$ times as many intervals in each coordinate direction, and the transfer samples the old-stage profile at fractional locations inside each coarse cell. The 12-point interpolation introduced below is chosen so as to combine \emph{unisolvence of the coarse-to-fine transfer} with \emph{compatibility with the centered-difference Laplacian}.

In a finite-window implementation the 12-point interpolation is constructed on $Q_{{\rm int},h_m}$, while the stage energy is evaluated on $Q_{{\rm en},h_m}$. The real issues are therefore not ambiguous patch choices on coarse-cell edges, but rather
\begin{enumerate}[label=(\roman*)]
\item fine nodes whose centered Laplacian stencil crosses a coarse-cell interface, and
\item transfer near the computational boundary.
\end{enumerate}

\subsection{The 12-point interpolation polynomial}

On a coarse cell $[x_i,x_{i+1}]\times[y_j,y_{j+1}]$ we introduce
\[
\theta=\frac{x-x_i}{h_m},\qquad \zeta=\frac{y-y_j}{h_m}.
\]
From the $16$-point stencil $\{-1,0,1,2\}^2$ we remove the four corners and obtain
\[
S_{12}=
\{(-1,0),(-1,1),(0,-1),(0,0),(0,1),(0,2),
(1,-1),(1,0),(1,1),(1,2),(2,0),(2,1)\}.
\]
We seek the interpolation polynomial in the form
\[
\begin{aligned}
P_{ij}(x,y)=\,&
A_0+A_1\theta+A_2\zeta+A_3\theta^2+A_4\theta\zeta+A_5\zeta^2\\
&+A_6\theta^3+A_7\theta^2\zeta+A_8\theta\zeta^2+A_9\zeta^3
+A_{10}\theta^3\zeta+A_{11}\theta\zeta^3.
\end{aligned}
\]

\begin{lemma}[Unisolvence of the 12-point interpolation]
The interpolation conditions
\[
P_{ij}(x_i+ah_m,y_j+bh_m)=W_{i+a,j+b},
\qquad (a,b)\in S_{12},
\]
determine the coefficients $(A_0,\dots,A_{11})$ uniquely.
\end{lemma}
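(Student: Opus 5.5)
We show that the $12\times 12$ collocation matrix of the interpolation conditions is nonsingular. Since the number of unknowns equals the number of conditions, it suffices to prove that the homogeneous problem has only the trivial solution. Suppose $p=P_{ij}$, viewed as a polynomial in $(\theta,\zeta)$, has the stated form and vanishes at every point of $S_{12}$. We show that $A_0=\dots=A_{11}=0$. The idea is to organize $S_{12}$ into the two horizontal rows $\zeta=0$ and $\zeta=1$, each containing the four nodes $\theta\in\{-1,0,1,2\}$. The remaining four nodes $(0,-1),(1,-1),(0,2),(1,2)$ lie on the two vertical lines $\theta=0$ and $\theta=1$. We then peel off coefficients by one-variable root counting.

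\emph{Step 1 (row $\zeta=0$).} Here $p(\theta,0)=A_0+A_1\theta+A_3\theta^2+A_6\theta^3$. This is a cubic in $\theta$ vanishing at four distinct points, so $A_0=A_1=A_3=A_6=0$. Hence $p=\zeta\,q$ with
\[
q(\theta,\zeta)=A_2+A_4\theta+A_5\zeta+A_7\theta^2+A_8\theta\zeta+A_9\zeta^2+A_{10}\theta^3+A_{11}\theta\zeta^2 .
\]
Since every remaining node has $\zeta\neq 0$, $q$ must vanish at all of them.

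\emph{Step 2 (row $\zeta=1$).} We have
\[
q(\theta,1)=(A_2+A_5+A_9)+(A_4+A_8+A_{11})\theta+A_7\theta^2+A_{10}\theta^3 .
\]
This cubic vanishes at four points, so $A_7=A_{10}=0$ together with $A_2+A_5+A_9=0$ and $A_4+A_8+A_{11}=0$. Then $q=a(\zeta)+\theta\, b(\zeta)$ with the quadratics
\[
a(\zeta)=A_2+A_5\zeta+A_9\zeta^2,\qquad b(\zeta)=A_4+A_8\zeta+A_{11}\zeta^2,
\]
and the two linear relations just obtained say exactly that $a(1)=b(1)=0$.

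\emph{Step 3 (the four extra nodes).} The nodes on $\theta=0$ give $a(-1)=a(2)=0$. Combined with $a(1)=0$, the quadratic $a$ has three distinct roots, so $a\equiv 0$. The nodes on $\theta=1$ then give $b(-1)=b(2)=0$. Combined with $b(1)=0$, this forces $b\equiv 0$. Thus $A_2=A_5=A_9=A_4=A_8=A_{11}=0$, so all coefficients vanish and the interpolation problem is uniquely solvable.

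The only delicate point is choosing the elimination order so that each one-variable restriction has degree strictly less than its number of nodes. This is where the specific monomial set matters. Removing the four corners of $\{-1,0,1,2\}^2$ is compensated exactly by replacing $\theta^3\zeta^2$-type terms with $\theta^3\zeta$ and $\theta\zeta^3$. These terms survive on the full rows $\zeta=0,1$ but drop to quadratics in $\zeta$ on the short columns $\theta=0,1$. No further computation beyond this root counting is needed.
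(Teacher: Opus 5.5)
Your proof is correct and follows essentially the same route as the paper: you reduce to the homogeneous problem and eliminate coefficients by one-variable root counting along the four lines $\theta=0,1$ and $\zeta=0,1$. The only difference is the elimination order: the paper treats the full columns $\theta=0,1$ first and then the rows $\zeta=0,1$, each time as a cubic with four nodes, whereas you start from the row $\zeta=0$, factor out $\zeta$, and finish with quadratics in $\zeta$ on the columns.
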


\begin{proof}
It is enough to prove that the only polynomial in the interpolation space that vanishes at all twelve stencil points is the zero polynomial. Suppose therefore that the data on $S_{12}$ are all zero.

First fix $\theta=0$. Then
\[
P_{ij}(0,\zeta)=A_0+A_2\zeta+A_5\zeta^2+A_9\zeta^3
\]
is a cubic polynomial in $\zeta$ that vanishes at $\zeta=-1,0,1,2$. Hence $P_{ij}(0,\zeta)\equiv 0$, and therefore
\[
A_0=A_2=A_5=A_9=0.
\]
Next fix $\theta=1$. Then
\[
P_{ij}(1,\zeta)
=
(A_1+A_3+A_6)
+(A_4+A_7+A_{10})\zeta
+A_8\zeta^2
+A_{11}\zeta^3
\]
also vanishes at $\zeta=-1,0,1,2$, so it is identically zero. Thus
\[
A_8=A_{11}=0,
\qquad
A_1+A_3+A_6=0,
\qquad
A_4+A_7+A_{10}=0.
\]

Now fix $\zeta=0$. Since
\[
P_{ij}(\theta,0)=A_1\theta+A_3\theta^2+A_6\theta^3
\]
vanishes at $\theta=-1,0,1,2$, it must be identically zero, and hence
\[
A_1=A_3=A_6=0.
\]
Finally, with $\zeta=1$ and using $A_8=A_{11}=0$, we obtain
\[
P_{ij}(\theta,1)=A_4\theta+A_7\theta^2+A_{10}\theta^3,
\]
which again vanishes at $\theta=-1,0,1,2$. Therefore
\[
A_4=A_7=A_{10}=0.
\]

All coefficients vanish, so the homogeneous interpolation problem has only the trivial solution. Hence the interpolation conditions uniquely determine the coefficients.
\end{proof}

\begin{lemma}[Edge consistency of the 12-point interpolation]
Let $P_{ij}$ and $P_{i+1,j}$ be the 12-point interpolation polynomials on two horizontally adjacent coarse cells. Then
\[
P_{ij}(1,\zeta)=P_{i+1,j}(0,\zeta)
\qquad\text{for all }\zeta.
\]
Similarly, for vertically adjacent cells,
\[
P_{ij}(\theta,1)=P_{i,j+1}(\theta,0)
\qquad\text{for all }\theta.
\]
Hence the piecewise 12-point prolongation is single-valued and continuous across every interior coarse-cell interface.
\end{lemma}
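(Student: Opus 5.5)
The plan is to argue exactly as in the unisolvence lemma: restrict each polynomial to the shared edge and identify the restriction as a one-variable cubic determined by four data values that both stencils contain.

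First consider horizontally adjacent cells. Restricting the ansatz to $\theta=1$ gives
\[
P_{ij}(1,\zeta)=(A_0+A_1+A_3+A_6)+(A_2+A_4+A_7+A_{10})\zeta+(A_5+A_8)\zeta^2+(A_9+A_{11})\zeta^3 .
\]
This is a polynomial of degree at most three in $\zeta$. The monomials $\theta^3\zeta$ and $\theta\zeta^3$ only contribute to the $\zeta$ and $\zeta^3$ coefficients, so the degree does not increase. Likewise $P_{i+1,j}(0,\zeta)=A_0'+A_2'\zeta+A_5'\zeta^2+A_9'\zeta^3$ is a cubic in $\zeta$.

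Next we check the data.
\begin{itemize}
\item The stencil $S_{12}$ of cell $(i,j)$ contains the points $(1,b)$ for $b\in\{-1,0,1,2\}$. So $P_{ij}(1,b)=W_{i+1,j+b}$ for these four values of $b$.
\item The stencil of cell $(i+1,j)$ contains $(0,b)$ for $b\in\{-1,0,1,2\}$, in its own local coordinates. These correspond to the same global nodes $(x_{i+1},y_{j+b})$, so $P_{i+1,j}(0,b)=W_{i+1,j+b}$.
\end{itemize}
The difference $P_{ij}(1,\cdot)-P_{i+1,j}(0,\cdot)$ is therefore a cubic that vanishes at four distinct points. Hence it is identically zero.

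The vertical case is symmetric. At $\zeta=1$ we get
\[
P_{ij}(\theta,1)=(\dots)+(\dots)\theta+(A_3+A_7)\theta^2+(A_6+A_{10})\theta^3,
\]
which is cubic in $\theta$. The stencil points $(a,1)$ for $a\in\{-1,0,1,2\}$ lie in $S_{12}$ for cell $(i,j)$. The points $(a,0)$ for $a\in\{-1,0,1,2\}$ lie in $S_{12}$ for cell $(i,j+1)$, and they are the same global nodes $(x_{i+a},y_{j+1})$.

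Continuity of the piecewise prolongation then follows: on each shared edge the two traces agree identically. At corners the values coincide with the nodal data. So the prolongation is single-valued and in $C^0$.

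The only real point to verify is that the degree of the restriction stays at most three. This holds because the ansatz excludes monomials such as $\theta^3\zeta^2$. It is also the reason the four corners of the $16$-point stencil can be dropped: the edge rows $\theta\in\{0,1\}$ and the edge columns $\zeta\in\{0,1\}$ are retained in full.
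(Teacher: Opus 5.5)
Your proposal is correct and matches the paper's proof: restrict both cell polynomials to the shared edge, observe that the traces are cubics in the edge variable, and note that both interpolate the same four coarse values at $-1,0,1,2$. You also spell out the stencil-membership and degree checks that the paper leaves implicit. One small correction to your closing remark: the trace stays cubic because every monomial in the ansatz has degree at most three in each variable separately, not because $\theta^3\zeta^2$ is excluded, since that monomial would restrict to $\zeta^2$ on $\theta=1$.
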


\begin{proof}
On a common vertical edge, both $P_{ij}(1,\zeta)$ and $P_{i+1,j}(0,\zeta)$ are cubic polynomials in $\zeta$. By the structure of the 12-point stencil, both interpolate the same four coarse-grid values at $\zeta=-1,0,1,2$. The identity theorem for cubic polynomials therefore implies that they coincide identically. The horizontal-edge case is identical.
\end{proof}

\begin{lemma}[Local Laplace compatibility]
Define
\[
\widehat P(\xi,\eta):=k^{2/3}P(\xi/k,\eta/k).
\]
If the centered second-difference stencil at the fine node $(\xi,\eta)$ is contained in a single coarse cell, then
\[
\Delta_{h_{m+1}}\widehat P(\xi,\eta)
=
k^{-4/3}(\Delta P)(\xi/k,\eta/k).
\]
\end{lemma}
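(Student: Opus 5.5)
The plan is to reduce the identity to two one-dimensional statements. The first is the exactness of the centered second difference on polynomials of degree at most three. The second is the chain rule for the dilation $\xi\mapsto\xi/k$.

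The first step uses the single-cell hypothesis. Because the five-point stencil at $(\xi,\eta)$ lies in one coarse cell, all five values $\widehat P(\xi\pm h_{m+1},\eta)$, $\widehat P(\xi,\eta\pm h_{m+1})$, $\widehat P(\xi,\eta)$ are values of the \emph{same} polynomial $P=P_{ij}$ rescaled. Without this hypothesis the piecewise prolongation would mix two different polynomials. The edge-consistency lemma only guarantees continuity across an interface, not agreement of second differences, which is why the hypothesis is needed. We then split
\[
\Delta_{h_{m+1}}\widehat P=\delta^2_{\xi}\widehat P+\delta^2_{\eta}\widehat P,
\qquad
\delta^2_{\xi}f(\xi):=\frac{f(\xi+h)-2f(\xi)+f(\xi-h)}{h^2},
\]
and similarly for $\delta^2_\eta$.

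The second step is a degree count in each variable separately. Inspecting the twelve monomials of the interpolation space, the highest power of $\theta$ is $3$ (from $\theta^3$, $\theta^3\zeta$), and the highest power of $\zeta$ is $3$ (from $\zeta^3$, $\theta\zeta^3$). So for fixed $\eta$ the map $\xi\mapsto\widehat P(\xi,\eta)$ is a polynomial of degree at most $3$, and likewise in $\eta$; this survives the affine change $\theta=(x-x_i)/h_m$ and the dilation by $1/k$. For such a polynomial $f$, Taylor expansion at $\xi$ terminates at order three. The odd-order terms cancel in the symmetric combination, and the remainder $\tfrac{h^2}{12}f^{(4)}$ vanishes identically. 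Hence $\delta^2_\xi f=f''$ exactly, with no dependence on $h=h_{m+1}$. This is the key point: the mixed quartic terms $\theta^3\zeta$ and $\theta\zeta^3$, which are the reason for choosing this particular $12$-dimensional space, are still only cubic in each separate direction.

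The third step is the chain rule. For $\widehat P(\xi,\eta)=k^{2/3}P(\xi/k,\eta/k)$,
\[
\partial_\xi^2\widehat P=k^{2/3}k^{-2}(\partial_x^2P)(\xi/k,\eta/k),
\]
and the same holds in $\eta$. Summing the two directions gives $k^{-4/3}(\Delta P)(\xi/k,\eta/k)$, as claimed.

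I expect no analytic obstacle. The only delicate point is bookkeeping: making precise that ``the stencil is contained in a single coarse cell'' means the preimages $(\xi\pm h)/k$ etc.\ lie in the closed cell on which $P_{ij}$ is used. I would state this as a standing interpretation of the hypothesis, noting that on cell edges the choice is immaterial by edge consistency for the values themselves.
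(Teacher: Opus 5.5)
Your proposal is correct and follows essentially the same route as the paper: both arguments use the single-cell hypothesis, the exactness of the centered second difference for polynomials that are cubic in each variable separately, and the factor $k^{2/3}k^{-2}$ coming from the dilation. The only difference is the order of steps. The paper first rewrites the step-$h_{m+1}$ difference of $\widehat P$ as $k^{2/3}k^{-2}$ times a step-$h_m/k$ difference of $P$ and then invokes exactness, whereas you invoke exactness for $\widehat P$ directly and then apply the chain rule.
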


\begin{proof}
For the $x$-direction,
\[
\delta^2_{h_{m+1},x}\widehat P(\xi,\eta)
=
\frac{
k^{2/3}P((\xi+h_{m+1})/k,\eta/k)
-2k^{2/3}P(\xi/k,\eta/k)
+k^{2/3}P((\xi-h_{m+1})/k,\eta/k)
}{h_{m+1}^2}.
\]
Although the new-stage grid uses the same mesh width $h_{m+1}=h_m$, its preimage under $(\xi,\eta)\mapsto (\xi/k,\eta/k)$ is spaced by $h_m/k$ on the old coarse cell. Hence
\[
\delta^2_{h_{m+1},x}\widehat P(\xi,\eta)
=
k^{2/3}k^{-2}\,\delta^2_{h_m/k,x}P(\xi/k,\eta/k).
\]
Since $P$ is cubic in each variable, the centered second difference with step $h_m/k$ agrees exactly with the second derivative, and therefore
\[
\delta^2_{h_{m+1},x}\widehat P
=
k^{-4/3}P_{xx}(\xi/k,\eta/k).
\]
The $y$-direction is identical, and summing the two contributions yields the claim.
\end{proof}

\begin{remark}[Globally $C^0$, but generally not $C^1$]
By the edge-consistency lemma, the piecewise 12-point prolongation is globally $C^0$ across interior coarse-cell interfaces. Hence no artificial Dirichlet-energy jump arises from a discontinuity of the values themselves. On the other hand, the normal derivatives do not agree in general, so the prolongation is usually not $C^1$. Exact Laplace compatibility is therefore used only at fine nodes whose centered stencil remains inside a single coarse cell, whereas the contributions from interface-crossing stencils are absorbed into the switch defect.
\end{remark}

We denote by
\[
\widetilde Z_{m+1}:=\mathcal I_{12}^{m\to m+1}Z_m^{J_m}
\]
the raw 12-point prolongation. Because of edge consistency, no mathematical ambiguity remains on coarse-cell interfaces; in actual code, a half-open convention is enough to avoid duplicate assignments. Concretely,
\[
\widetilde Z_{m+1,ik+\ell,\,jk+r}
=
k^{2/3}P_{ij}^{(m)}
\!\left(
x_i+\frac{\ell}{k}h_m,\,
y_j+\frac{r}{k}h_m
\right),
\qquad 0\le \ell,r\le k.
\]
Because $h_{m+1}=h_m$, the index $ik+\ell$ labels a node of the enlarged stage-$(m+1)$ box rather than a refinement of the mesh width; its preimage under $\xi\mapsto \xi/k$ is precisely $x_i+(\ell/k)h_m$.

In the simplest full-domain implementation one simply sets
\[
Z_{m+1}^0=\widetilde Z_{m+1}.
\]
For a finite-window implementation, however, one may prefer to repair only the values near the computational boundary or unresolved interfaces. Let $\Gamma_{m+1}\subset Q_{{\rm en},h_{m+1}}^\circ$ denote such a boundary/interface strip. Define
\[
\|Y-\widetilde Z_{m+1}\|_{\Gamma_{m+1},h_{m+1}}^2
:=
h_{m+1}^2\sum_{p\in \Gamma_{m+1}}
|Y_p-\widetilde Z_{m+1,p}|^2,
\]
and the admissible set
\[
\mathcal B_{m+1}(\widetilde Z_{m+1})
:=
\left\{
Y\; ;\;
Y_p=\widetilde Z_{m+1,p}\ \text{for }p\notin \Gamma_{m+1},
Y_p\ge \eta_{m+1}>0\ \text{on }\Gamma_{m+1}
\right\}.
\]
Then one may define the stage-initialization by an energy-repair projection:
\[
Z_{m+1}^0\in
\arg\min_{Y\in \mathcal B_{m+1}(\widetilde Z_{m+1})}
\left\{
E_{h_{m+1},m+1}^{id}(Y)
+\frac{\mu A_{m+1}^2}{2}
\|Y-\widetilde Z_{m+1}\|_{\Gamma_{m+1},h_{m+1}}^2
\right\}.
\]
This formulation is meaningful only when the raw prolongation is already admissible outside the strip, i.e.
\[
\widetilde Z_{m+1,p}\ge \eta_{m+1}>0
\qquad\text{for }p\in Q_{{\rm en},h_{m+1}}^\circ\setminus\Gamma_{m+1}.
\]
Under the uniform regularity assumption and sufficiently small $h_m$, Proposition~\ref{prop:prolongation-bounds} below yields a uniform positive lower bound for the raw prolongation, so that $\widetilde Z_{m+1}\in \mathcal B_{m+1}(\widetilde Z_{m+1})$. Without that input, admissibility of the raw prolongation has to be imposed separately. Whenever $\widetilde Z_{m+1}$ is admissible, it can be used as a competitor and one automatically has
\[
E_{h_{m+1},m+1}^{id}(Z_{m+1}^0)
\le
E_{h_{m+1},m+1}^{id}(\widetilde Z_{m+1}).
\]

\subsection{Defect balance across stage transitions}

\begin{assumption}[Uniform rescaled regularity at stage transitions]
Let
\[
Q_{\rm en}\Subset Q_{\rm int}.
\]
For each exact stage-end profile there exists a rescaled profile $W_m^*$ defined on $Q_{\rm int}$ such that, with constants independent of $m$,
\[
0<c_*\le W_m^*(x)\le C_*,
\qquad
\|W_m^*\|_{C^4(Q_{\rm int})}\le C_*,
\]
\[
\|Z_m^{J_m}-I_{h_m}W_m^*\|_{\ell^\infty(Q_{{\rm int},h_m})}
\le \varepsilon_{h_m},
\qquad
\varepsilon_{h_m}\to 0,
\]
and the discrete differences of $Z_m^{J_m}$ up to order four are uniformly bounded on all stencil patches contained in $Q_{\rm int}$ that are needed to interpolate coarse cells in $Q_{\rm en}$.
\end{assumption}

\paragraph{Position of the switch-defect estimate.}
The previous assumption is not an algebraic consequence of the interpolation formula. It is an input assumption expressing uniform regularity of the rescaled quenching core on the \emph{buffered interpolation box} $Q_{\rm int}$, not merely on the energy box $Q_{\rm en}$. This distinction is important because the 12-point stencil uses one coarse-grid layer outside each coarse cell of $Q_{\rm en}$. Appendix~\ref{app:interp} explains how this assumption yields interpolation stability and consistency estimates, which in turn form the basis for a small switch defect. However, the bound
\[
\varepsilon_{m,h_m}^{sw}=O(A_m^2h_m^2)
\]
is \emph{not} proved unconditionally in the present paper. We therefore first state an exact defect balance and then impose the quantitative switch-defect estimate as a separate conditional assumption. In the buffered formulation, the contributions of interface-crossing stencils and of the boundary/interface strip are both included in the switch defect.

Let $Z_m^{J_m}$ be the terminal profile of stage $m$, and let $Z_{m+1}^0$ be the initial value of stage $m+1$. Denote by $\widehat I_{out,m+1}^{h_{m+1}}$ the ideal outer update and by $I_{out,m+1}^{h_{m+1}}$ the actual outer update, and set
\[
\varepsilon_m^{out}:=
\left|
I_{out,m+1}^{h_{m+1}}-\widehat I_{out,m+1}^{h_{m+1}}
\right|.
\]
These quantities refer only to the reciprocal-integral term in the denominator. Boundary and interface contributions in the Dirichlet part are not included in $\varepsilon_m^{out}$; they remain part of the switch defect.
We also define the next-stage discrete energy with the ideal outer update by
\[
E_{h_{m+1},m+1}^{id}(Y):=
\frac{A_{m+1}^2}{2}\|\nabla_{h_{m+1}}Y\|_{2,h_{m+1}}^2
+\frac{\lambda}{
1+\widehat I_{out,m+1}^{h_{m+1}}
+A_{m+1}^2h_{m+1}^2
\sum_{Q_{{\rm en},h_{m+1}}^\circ}Y_{ij}^{-1}
}.
\]

\paragraph{Signed switch defect and its positive part.}
We define the signed switch jump by
\[
\delta_{m,h_m}^{sw}:=
E_{h_{m+1},m+1}^{id}(Z_{m+1}^0)-E_{h_m,m}(Z_m^{J_m}),
\]
and its positive part by
\[
\varepsilon_{m,h_m}^{sw}:=(\delta_{m,h_m}^{sw})_+.
\]
This quantity includes the raw interpolation error on single-cell interiors, the contributions from interface-crossing stencils, and, if energy repair is used, the contribution of the boundary/interface strip.

\begin{theorem}[Exact defect balance with stage transitions]
Assume that
\[
I_{out,m+1}^{h_{m+1}}\ge 0,\qquad \widehat I_{out,m+1}^{h_{m+1}}\ge 0,
\]
and that $Z_m^{J_m}>0$ and $Z_{m+1}^0>0$. Then
\[
E_{h_{m+1},m+1}(Z_{m+1}^0)
\le
E_{h_m,m}(Z_m^{J_m})
+\varepsilon_{m,h_m}^{sw}
+\lambda\varepsilon_m^{out}.
\]
Moreover, for every $N\ge 1$,
\[
E_{h_N,N}(Z_N^0)
+\sum_{m=0}^{N-1}\sum_{j=0}^{J_m-1}
\frac{A_m^2}{2\Delta s}\|Z_m^{j+1}-Z_m^j\|_{2,h_m}^2
\le
E_{h_0,0}(Z_0^0)
+\sum_{m=0}^{N-1}
\bigl(\varepsilon_{m,h_m}^{sw}+\lambda\varepsilon_m^{out}\bigr).
\]
\end{theorem}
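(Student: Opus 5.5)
The plan is to split the one-step inequality into a comparison between the actual next-stage energy and the ideal one, and then chain the one-step inequality with the fixed-stage dissipation.

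\textbf{Step 1: actual versus ideal energy at the same argument.} For $Y=Z_{m+1}^0>0$ the two energies $E_{h_{m+1},m+1}$ and $E^{id}_{h_{m+1},m+1}$ share the Dirichlet part. They differ only in the reciprocal term. Set
\[
S:=A_{m+1}^2h_{m+1}^2\sum_{Q_{{\rm en},h_{m+1}}^\circ}Y_{ij}^{-1}\ge 0,\qquad a:=1+I_{out,m+1}^{h_{m+1}}+S,\qquad b:=1+\widehat I_{out,m+1}^{h_{m+1}}+S.
\]
The hypotheses $I_{out}\ge0$ and $\widehat I_{out}\ge0$ give $a,b\ge1$. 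Hence
\[
\left|\frac{\lambda}{a}-\frac{\lambda}{b}\right|=\lambda\frac{|a-b|}{ab}\le\lambda|a-b|=\lambda\varepsilon_m^{out}.
\]
Therefore $E_{h_{m+1},m+1}(Z_{m+1}^0)\le E^{id}_{h_{m+1},m+1}(Z_{m+1}^0)+\lambda\varepsilon_m^{out}$. Positivity of $Z_{m+1}^0$ ensures that we are on the finite branch of $K$, so no $+\infty$ convention intervenes.

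\textbf{Step 2: the one-step inequality.} By definition of the signed switch jump,
\[
E^{id}_{h_{m+1},m+1}(Z_{m+1}^0)=E_{h_m,m}(Z_m^{J_m})+\delta^{sw}_{m,h_m}\le E_{h_m,m}(Z_m^{J_m})+\varepsilon^{sw}_{m,h_m},
\]
since $\delta\le\delta_+$. Positivity of $Z_m^{J_m}$ makes $E_{h_m,m}(Z_m^{J_m})$ finite. Combining this with Step 1 gives the first claim.

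\textbf{Step 3: the cumulative inequality.} For each stage $m$, sum the discrete dissipation inequality of the fixed-stage proposition over $j=0,\dots,J_m-1$. The sum telescopes to
\[
E_{h_m,m}(Z_m^{J_m})+\sum_{j=0}^{J_m-1}\frac{A_m^2}{2\Delta s}\|Z_m^{j+1}-Z_m^j\|_{2,h_m}^2\le E_{h_m,m}(Z_m^0).
\]
This uses that the minimizing-movement steps are well defined on the admissible branch. Now add the stage-transition inequality from Steps 1--2 to move from $E_{h_m,m}(Z_m^{J_m})$ to $E_{h_{m+1},m+1}(Z_{m+1}^0)$. Summing over $m=0,\dots,N-1$, all intermediate energies $E_{h_m,m}(Z_m^0)$ for $1\le m\le N-1$ cancel. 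The energy values are finite, and this leaves exactly the stated inequality by induction on $N$.

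\textbf{Main obstacle.} The only non-bookkeeping point is Step 1, namely the Lipschitz bound for $\lambda/K$ in the outer term. It needs both denominators to be at least $1$, which is precisely what the nonnegativity hypotheses on the outer updates and the positivity of $Z_{m+1}^0$ provide. Everything else is telescoping together with the definition of the positive part.
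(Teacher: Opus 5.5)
Your proposal is correct and follows the paper's proof essentially line by line. The paper likewise bounds $|E_{h_{m+1},m+1}(Y)-E^{id}_{h_{m+1},m+1}(Y)|\le\lambda\varepsilon_m^{out}$ because both denominators are at least $1$, splits the stage jump into this outer part plus the signed switch jump (bounded by its positive part), and then telescopes the summed fixed-stage dissipation inequalities; your estimate $\lambda|a-b|/(ab)\le\lambda|a-b|$ simply writes out the paper's one-line Lipschitz step explicitly.
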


\begin{proof}
For any positive grid function $Y$, the two denominators differ only through the outer updates and, by the nonnegativity assumption on $I_{out,m+1}^{h_{m+1}}$ and $\widehat I_{out,m+1}^{h_{m+1}}$, both are at least $1$. Hence
\[
\left|
E_{h_{m+1},m+1}(Y)-E_{h_{m+1},m+1}^{id}(Y)
\right|
\le
\lambda\varepsilon_m^{out}.
\]
Therefore,
\[
\begin{aligned}
E_{h_{m+1},m+1}(Z_{m+1}^0)-E_{h_m,m}(Z_m^{J_m})
&=
\Bigl(E_{h_{m+1},m+1}(Z_{m+1}^0)-E_{h_{m+1},m+1}^{id}(Z_{m+1}^0)\Bigr)\\
&\quad+
\Bigl(E_{h_{m+1},m+1}^{id}(Z_{m+1}^0)-E_{h_m,m}(Z_m^{J_m})\Bigr)\\
&\le \lambda\varepsilon_m^{out}+\varepsilon_{m,h_m}^{sw}.
\end{aligned}
\]
This proves the stage-jump estimate. Summing the fixed-stage discrete dissipation inequalities inside each stage and then taking the telescoping sum over $m$ yields the cumulative bound.
\end{proof}

\paragraph{Conditional switch-defect bound.}
In the remainder of the analysis we assume that
\[
\varepsilon_{m,h_m}^{sw}\le C_{sw}A_m^2h_m^2
\]
holds for all $m$. In the present paper this is a \emph{conditional assumption}, not an unconditional theorem.

\begin{proposition}[Conditional quantitative almost monotonicity]
Under the above switch-defect assumption,
\[
E_{h_{m+1},m+1}(Z_{m+1}^0)
\le
E_{h_m,m}(Z_m^{J_m})
+C_{sw}A_m^2h_m^2+\lambda\varepsilon_m^{out}.
\]
Moreover, for every $N\ge 1$,
\[
E_{h_N,N}(Z_N^0)
+\sum_{m=0}^{N-1}\sum_{j=0}^{J_m-1}
\frac{A_m^2}{2\Delta s}\|Z_m^{j+1}-Z_m^j\|_{2,h_m}^2
\le
E_{h_0,0}(Z_0^0)
+\sum_{m=0}^{N-1}
\left(
C_{sw}A_m^2h_m^2+\lambda\varepsilon_m^{out}
\right).
\]
In particular, if $\sum_m\varepsilon_m^{out}<\infty$, then the discrete energy remains uniformly bounded up to the cumulative defect budget.
\end{proposition}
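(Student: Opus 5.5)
This proposition follows directly from the exact defect balance theorem once the conditional switch-defect bound $\varepsilon_{m,h_m}^{sw}\le C_{sw}A_m^2h_m^2$ is substituted. For the one-step estimate I will invoke the first inequality of the theorem. Its hypotheses are $I_{out,m+1}^{h_{m+1}}\ge 0$, $\widehat I_{out,m+1}^{h_{m+1}}\ge 0$, $Z_m^{J_m}>0$ and $Z_{m+1}^0>0$. Positivity of $Z_m^{J_m}$ comes from the positivity lemma on the admissible branch, iterated through the stage under the standing positivity assumption. Positivity of $Z_{m+1}^0$ comes either from admissibility of the raw prolongation or from the constraint $Y_p\ge\eta_{m+1}$ in the repair set $\mathcal B_{m+1}$. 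Nonnegativity of the outer updates is part of their construction. The theorem then gives
\[
E_{h_{m+1},m+1}(Z_{m+1}^0)\le E_{h_m,m}(Z_m^{J_m})+\varepsilon_{m,h_m}^{sw}+\lambda\varepsilon_m^{out},
\]
and replacing $\varepsilon_{m,h_m}^{sw}$ by $C_{sw}A_m^2h_m^2$ yields the first claim.

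For the cumulative bound I will chain two ingredients. Within stage $m$, summing the discrete dissipation inequality (Proposition on discrete energy dissipation at a fixed stage) over $j=0,\dots,J_m-1$ telescopes to
\[
E_{h_m,m}(Z_m^{J_m})+\sum_{j=0}^{J_m-1}\frac{A_m^2}{2\Delta s}\|Z_m^{j+1}-Z_m^j\|_{2,h_m}^2\le E_{h_m,m}(Z_m^0).
\]
Combining this with the one-step transition estimate gives, for each $m$,
\[
E_{h_{m+1},m+1}(Z_{m+1}^0)+\sum_{j=0}^{J_m-1}\frac{A_m^2}{2\Delta s}\|Z_m^{j+1}-Z_m^j\|_{2,h_m}^2\le E_{h_m,m}(Z_m^0)+C_{sw}A_m^2h_m^2+\lambda\varepsilon_m^{out}.
\]
Summing over $m=0,\dots,N-1$, the terms $E_{h_m,m}(Z_m^0)$ for $1\le m\le N-1$ cancel and the stated inequality remains. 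Equivalently, one can take the cumulative inequality of the theorem and bound each $\varepsilon^{sw}_{m,h_m}$ termwise.

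For the final sentence, note that $A_m=k^{-2\,m/3}A_0$ and $h_m\equiv h$ with $k>1$. Hence $\sum_m C_{sw}A_m^2h^2=C_{sw}A_0^2h^2/(1-k^{-4/3})$ is a convergent geometric series. If in addition $\sum_m\varepsilon_m^{out}<\infty$, the right-hand side is bounded uniformly in $N$. Since the dissipation sum is nonnegative, $E_{h_N,N}(Z_N^0)$ is bounded by $E_{h_0,0}(Z_0^0)$ plus this finite budget. Monotonicity within each stage then extends the bound to every $Z_N^j$.

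There is no real obstacle here. The only point needing care is verifying the positivity hypotheses of the theorem at every transition. I will state these explicitly as inherited from the admissible-branch restriction $\Delta s<\Delta s_m^*$ and from admissibility of $Z_{m+1}^0$, rather than re-deriving them.
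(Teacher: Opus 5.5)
Your proposal is correct and follows the paper's proof, which simply inserts $\varepsilon^{sw}_{m,h_m}\le C_{sw}A_m^2h_m^2$ into the exact defect balance theorem, giving both the one-step and the cumulative inequality. The paper leaves the final ``in particular'' sentence implicit; your check that $\sum_m C_{sw}A_m^2h^2$ is a convergent geometric series (because $A_{m+1}=k^{-2/3}A_m$ and $h_m\equiv h$) supplies the missing justification.
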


\begin{proof}
Insert the conditional switch-defect bound into the previous theorem.
\end{proof}

\begin{remark}[A raw-transfer bound automatically extends to energy repair]
If $Z_{m+1}^0$ is obtained from the raw prolongation $\widetilde Z_{m+1}$ by the energy-repair projection above, then
\[
E_{h_{m+1},m+1}^{id}(Z_{m+1}^0)
\le
E_{h_{m+1},m+1}^{id}(\widetilde Z_{m+1}).
\]
Hence any switch-defect upper bound proved for the raw transfer carries over directly to the repaired initialization. In this precise sense, the energy repair does not worsen the switch defect.
\end{remark}

\begin{remark}[Why we do not use one-sided stencils in the main formulation]
We do not adopt ghost values or one-sided stencils at the computational boundary as part of the main formulation. If such devices are used, they should be represented by an additional boundary defect separated from the centered-difference structure. By working with a buffered interpolation box $Q_{\rm int}$ and, if needed, an energy-repair projection, one avoids introducing that extra defect into the main theorem.
\end{remark}

\paragraph{Full-domain implementation.}
In the full-domain stagewise computations reported later,
\[
Q_{\rm en}=Q_{\rm int}=\Omega_{A_m},
\qquad
\varepsilon_m^{out}=0,
\]
so the stage-jump error reduces to the switch defect alone. Near the physical boundary of the rescaled square, however, the complete 12-point stencil is not available in the buffered sense used by the analysis. In the present illustrative computations that boundary-layer contribution is not measured separately and should therefore be regarded as part of the unmeasured switch defect.

\section{A Defect-Aware Criterion for the Nonexistence of a Global Admissible Continuation}

The result of this section is not a finite-time quenching theorem. It is a criterion that rules out the existence of a \emph{global admissible stagewise discrete solution}. To make this distinction explicit, we use the phrase \emph{criterion for the nonexistence of a global admissible continuation} instead of ``quenching criterion.''

Define the defect budget by
\[
D^*:=\sum_{m=0}^\infty
\bigl(\varepsilon_{m,h_m}^{sw}+\lambda\varepsilon_m^{out}\bigr).
\]
If $D^*<\infty$, then at every stage start
\[
E_{h_m,m}(Z_m^0)\le E_{h_0,0}(Z_0^0)+D^*,
\]
and by fixed-stage dissipation the same bound holds for every inner step:
\[
E_{h_m,m}(Z_m^n)\le E_{h_0,0}(Z_0^0)+D^*,
\qquad 0\le n\le J_m.
\]

To avoid a collision between the within-stage time index and the second spatial index, we use $n$ for inner time levels in this section and denote a generic interior node by
\[
p\in Q_{{\rm en},h_m}^\circ.
\]
We reconstruct the physical variable and the physical time by
\[
U_{m,p}^{\,n}:=1-A_m Z_{m,p}^{\,n},
\qquad
Y_h^{m,n}:=
h_m^2\sum_{p\in Q_{{\rm en},h_m}^\circ}(U_{m,p}^{\,n})^2,
\]
and
\[
t_{m,n}:=
\sum_{\ell=0}^{m-1}J_\ell A_\ell^3\Delta s
+nA_m^3\Delta s.
\]
For each stage we further set
\[
q_m:=
\begin{cases}
1, & |Q_{{\rm en},h_m}|\le 1/2,\\[0.5ex]
(2|Q_{{\rm en},h_m}|)^{-1}, & |Q_{{\rm en},h_m}|>1/2,
\end{cases}
\qquad
|Q_{{\rm en},h_m}|:=h_m^2\#Q_{{\rm en},h_m}.
\]

From this point on we impose the additional bounded-window hypothesis
\[
\sup_{m\ge 0}|Q_{{\rm en},h_m}|<\infty.
\]
Equivalently,
\[
q_*:=\inf_{m\ge 0}q_m>0.
\]
This assumption is natural for fixed or buffered finite-window implementations. It generally fails for the full-domain computations of Section~6, because there one has $Q_{\rm en}^{(m)}=\Omega_{A_m}$ and the rescaled box grows as $A_m\to 0$.

\begin{theorem}[Stagewise criterion for the nonexistence of a global admissible continuation]\label{thm:continuation}
Assume $D^*<\infty$, assume in addition that
\[
\sup_{m\ge 0}|Q_{{\rm en},h_m}|<\infty,
\]
and construct $(U_m^n,Y_h^{m,n},t_{m,n})$ from the stagewise discrete trajectory. Suppose further that there exists a nonnegative sequence $(\delta_{m,h_m}^Y)$ such that
\[
\frac{Y_h^{m,n+1}-Y_h^{m,n}}{2A_m^3\Delta s}
\ge
-2E_{h_m,m}(Z_m^n)+\lambda q_m,
\qquad 0\le n\le J_m-1,
\]
and
\[
Y_h^{m+1,0}\ge Y_h^{m,J_m}-\delta_{m,h_m}^Y,
\qquad
\sum_{m=0}^\infty \delta_{m,h_m}^Y<\infty.
\]
If
\[
E_{h_0,0}(Z_0^0)+D^*<\frac{\lambda q_*}{2},
\]
then there is no global admissible stagewise discrete solution satisfying
\[
0\le U_{m,p}^{\,n}<1
\qquad\text{for all stage indices $m$, all $0\le n\le J_m$, and all $p\in Q_{{\rm en},h_m}^\circ$},
\]
together with $t_{m,n}\to\infty$.
\end{theorem}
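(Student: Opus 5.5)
The argument is by contradiction, in the style of a Kaplan/Levine concavity argument for the discrete second moment $Y_h^{m,n}$. Suppose a global admissible stagewise solution exists, with $0\le U_{m,p}^{\,n}<1$ everywhere and $t_{m,n}\to\infty$.

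The first step uses the remarks preceding the theorem. Since $D^*<\infty$, the defect balance theorem gives $E_{h_m,m}(Z_m^0)\le E_{h_0,0}(Z_0^0)+D^*$ at every stage start. Fixed-stage discrete dissipation then propagates this bound to every inner level, so
\[
E_{h_m,m}(Z_m^n)\le E_{h_0,0}(Z_0^0)+D^*=:E^*
\]
for all $m$ and all $0\le n\le J_m$. Since $q_m\ge q_*$ and $\lambda>0$, the assumed differential inequality yields, for every inner step,
\[
Y_h^{m,n+1}-Y_h^{m,n}\ge 2A_m^3\Delta s\,\bigl(\lambda q_*-2E^*\bigr)=:2A_m^3\Delta s\,\gamma .
\]
The hypothesis $E^*<\lambda q_*/2$ gives $\gamma>0$.

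The second step sums these increments. Within stage $m$ we obtain $Y_h^{m,J_m}\ge Y_h^{m,0}+2\gamma J_mA_m^3\Delta s$. Across the transition we use $Y_h^{m+1,0}\ge Y_h^{m,J_m}-\delta^Y_{m,h_m}$. Telescoping then gives
\[
Y_h^{m,n}\ge Y_h^{0,0}+2\gamma\, t_{m,n}-\sum_{\ell=0}^{m-1}\delta^Y_{\ell,h_\ell}\ge Y_h^{0,0}+2\gamma\, t_{m,n}-\sum_{\ell\ge0}\delta^Y_{\ell,h_\ell}.
\]
This works because $t_{m,n}$ is exactly the accumulated sum of the $A_\ell^3\Delta s$ increments. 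By assumption $t_{m,n}\to\infty$ along the trajectory and $\sum_\ell \delta^Y_{\ell,h_\ell}<\infty$, so $Y_h^{m,n}\to\infty$.

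The third step produces the contradiction from admissibility. The constraint $0\le U_{m,p}^{\,n}<1$ gives $(U_{m,p}^{\,n})^2<1$, and hence
\[
Y_h^{m,n}\le h_m^2\,\#Q_{{\rm en},h_m}^\circ\le |Q_{{\rm en},h_m}|\le \sup_{m}|Q_{{\rm en},h_m}|<\infty .
\]
This uniform bound contradicts $Y_h^{m,n}\to\infty$, so no global admissible continuation with $t_{m,n}\to\infty$ can exist.

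The argument involves no genuine analytic obstacle, because the key inequality $\frac{Y_{n+1}-Y_n}{2A^3\Delta s}\ge -2E+\lambda q_m$ is assumed rather than derived. The main care point is bookkeeping. We must confirm that the uniform energy bound really holds at every inner level: stage-start bounds come from the cumulative defect balance with nonnegative dissipation terms dropped, and inner levels then follow by monotonicity. We must also confirm that $t_{m,n}$ matches the telescoped sum, including the transitions, which contribute no physical time but cost $\delta^Y$. Finally, the uniform admissibility bound on $Y$ needs the bounded-window hypothesis, which is exactly why $\sup_m|Q_{{\rm en},h_m}|<\infty$ is imposed and why $q_*>0$.
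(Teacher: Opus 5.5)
Your proposal is correct and follows essentially the same route as the paper's proof. You obtain the uniform bound $E_{h_m,m}(Z_m^n)\le E_{h_0,0}(Z_0^0)+D^*$ from the defect balance and fixed-stage dissipation, and you use the positive constant $\lambda q_*-2\bigl(E_{h_0,0}(Z_0^0)+D^*\bigr)$. You then telescope the growth inequality against $t_{m,n+1}-t_{m,n}=A_m^3\Delta s$ with the summable losses $\delta^Y_{\ell,h_\ell}$ at switches, and reach a contradiction with the admissibility bound $0\le Y_h^{m,n}\le\sup_\ell|Q_{{\rm en},h_\ell}|$.
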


\begin{proof}
For every $(m,n)$,
\[
E_{h_m,m}(Z_m^n)\le E_{h_0,0}(Z_0^0)+D^*.
\]
Hence
\[
c_*:=\lambda q_*-2\bigl(E_{h_0,0}(Z_0^0)+D^*\bigr)>0.
\]
The assumed growth bound yields
\[
Y_h^{m,n+1}-Y_h^{m,n}\ge 2c_*A_m^3\Delta s
=
2c_*(t_{m,n+1}-t_{m,n}).
\]
Summing inside each stage and using the transfer defect at every switch, we obtain
\[
Y_h^{m,n}\ge Y_h^{0,0}+2c_*t_{m,n}
-\sum_{\ell=0}^{m-1}\delta_{\ell,h_\ell}^Y.
\]
Since $t_{m,n}\to\infty$ and $\sum_\ell\delta_{\ell,h_\ell}^Y<\infty$, the right-hand side diverges to $+\infty$. On the other hand, admissibility implies
\[
0\le Y_h^{m,n}\le |Q_{{\rm en},h_m}|
\le \sup_{\ell\ge 0}|Q_{{\rm en},h_\ell}|,
\]
which is a contradiction. Therefore no such global admissible stagewise solution can exist.
\end{proof}

\begin{remark}[Scope of the criterion]
The bounded-measure assumption is part of the theorem, not a consequence of the full-domain stagewise formulation. In particular, the full-domain computations of Section~6 do \emph{not} provide a numerical validation of this theorem. They only illustrate the fixed-stage dissipation design and the geometric accumulation of physical time.
\end{remark}

\paragraph{Interpretation.}
The previous theorem does \emph{not} directly prove finite-time quenching. Rather, it separates the remaining tasks into two transparent conditions:
\begin{enumerate}[label=(\roman*)]
\item a direct-solver-type growth estimate for the physical quantity $Y_h$, and
\item summability of the transfer defect at stage transitions.
\end{enumerate}
Once these are available on uniformly bounded energy boxes, the defect budget can be absorbed and the existence of a global admissible continuation is excluded.

\section{Reproducible Reference Computations}

The numerical experiments in this section are intended to accompany the theoretical reconstruction above by illustrating two structural features of the algorithm:
\begin{enumerate}[label=(\roman*)]
\item energy decay within fixed-stage or direct evolutions, and
\item trigger detection together with the geometric accumulation of physical time in a full-domain stagewise computation.
\end{enumerate}
They are not used in the proof of the theoretical results. In particular, the computations below do not test the bounded-window continuation criterion of Section~5 and do not constitute an a posteriori verification of the switch-defect estimate. The feedback diagnostics reported below confirm that the nonlocal feedback remains finite in the recorded stages, but they do not by themselves prove convergence to an asymptotically constant coefficient.

The values reported below are generated with the explicitly specified reference implementation summarized in Table~\ref{tab:reference-implementation}. The implementation uses the deficit variable $v=1-u$ in direct runs and the rescaled deficit $W$ in stagewise runs. The discrete energy is evaluated with forward finite differences for the Dirichlet term, including boundary values, and with the interior rectangle rule for the reciprocal integral.

\begin{table}[htbp]
\centering
\caption{Implementation conventions used for the reproducible reference computations.}
\label{tab:reference-implementation}
\begin{tabular}{>{\raggedright\arraybackslash}p{0.34\textwidth}>{\raggedright\arraybackslash}p{0.58\textwidth}}
\toprule
item & choice used in the reported reference computations \\
\midrule
stagewise time step & fixed rescaled step $\Delta s=10^{-3}$ \\
time stepping & backward Euler for the diffusion part with Picard iteration for the nonlinear nonlocal source \\
Picard stopping rule & $\|Y^{(r+1)}-Y^{(r)}\|_\infty<10^{-10}\max\{1,\|Y^{(r+1)}\|_\infty\}$, with at most 50 iterations \\
positivity safeguard & values below $10^{-12}$ are clipped to $10^{-12}$ only inside reciprocal evaluations \\
event detection & linear interpolation between two consecutive states when $\min W_m$ crosses $k^{-2/3}$ \\
2D stage transfer & 12-point coarse-to-fine prolongation; out-of-domain coarse values in boundary patches are filled with the previous-stage boundary value $1/A_m$, and the new boundary is reset to $1/A_{m+1}$ \\
2D direct run & $\Delta t=5\times10^{-4}$, final time $T=0.08$ \\
linear algebra & sparse direct solution of the linear systems arising in the implicit diffusion step \\
feedback diagnostics & reconstructed from the same reference run and reported in Table~\ref{tab:2d-feedback}; these values are not used in the proof \\
\bottomrule
\end{tabular}
\end{table}

The full-domain stagewise computations are performed on the growing rescaled square
\[
Q_{\rm en}^{(m)}=Q_{\rm int}^{(m)}=\Omega_{A_m}.
\]
Thus the discrete measure $|Q_{{\rm en},h_m}|$ increases with $m$, and these computations are outside the bounded-window hypothesis of Theorem~\ref{thm:continuation}. Although Table~\ref{tab:2d-stagewise} below reports the actual start and end energies of each fixed stage, the ideal-transfer energies needed to isolate the switch defect,
\[
E_{h_{m+1},m+1}^{id}(\widetilde Z_{m+1})
\quad\text{and}\quad
E_{h_{m+1},m+1}^{id}(Z_{m+1}^0),
\]
together with a separate boundary-layer contribution, were not recorded. Therefore the table should not be interpreted as an a posteriori verification of the switch-defect bound. By contrast, the feedback quantities
\[
K_{h_m,m}(Z_m^0),\quad K_{h_m,m}(Z_m^{J_m}),\quad
\lambda K_{h_m,m}(Z_m^0)^{-2},\quad \lambda K_{h_m,m}(Z_m^{J_m})^{-2}
\]
were reconstructed from the same reference run and are reported in Table~\ref{tab:2d-feedback}. These feedback data support only the finite-feedback character of the recorded stages; they do not prove convergence to an asymptotically constant-feedback regime.

\subsection{Two-dimensional full-domain stagewise rescaling}

We consider the full-domain stagewise rescaling on the unit square with
\[
u_0(x,y)=0.4\sin(\pi x)\sin(\pi y),
\]
parameter $\lambda=20$, rescaling center $(1/2,1/2)$, and stage factor $k=2$. The initial amplitude is $A_0=0.6$. At stage $m$ the rescaled domain is $[-L_m,L_m]^2$, where
\[
L_m=\frac{1}{2A_m^{3/2}},
\qquad
h_m=\frac{2L_m}{N_m}.
\]
We set $N_0=9$ and $N_{m+1}=2N_m$, so that the mesh width is kept fixed under full-domain dilation. The trigger threshold is
\[
\min W_m=k^{-2/3}=2^{-2/3}\approx 0.6299605.
\]

\begin{table}[htbp]
\centering
\caption{Reference two-dimensional full-domain stagewise computation for $k=2$. Here $E_{\rm start}$ and $E_{\rm end}$ are the discrete energies at the beginning and end of each fixed stage.}
\label{tab:2d-stagewise}
\resizebox{\textwidth}{!}{%
\begin{tabular}{cccccccccc}
\toprule
stage $m$ & $A_m$ & $N_m$ & $h_m$ & $A_m^2h_m^2$ & scaled time & $\min W_m$ & accumulated time & $E_{\rm start}$ & $E_{\rm end}$\\
\midrule
0 & $6.000000\times10^{-1}$ & 9  & $2.39073046\times10^{-1}$ & $2.05761317\times10^{-2}$ & 0.139155092 & 0.629960525 & 0.0300574999 & 10.3614604375 & 9.9453726799\\
1 & $3.779763\times10^{-1}$ & 18 & $2.39073046\times10^{-1}$ & $8.16564327\times10^{-3}$ & 0.129075841 & 0.629960525 & 0.0370275953 & 9.5551471290 & 9.4090656585\\
2 & $2.381102\times10^{-1}$ & 36 & $2.39073046\times10^{-1}$ & $3.24053768\times10^{-3}$ & 0.182219797 & 0.629960525 & 0.0394875626 & 9.2352717791 & 9.1292667294\\
3 & $1.500000\times10^{-1}$ & 72 & $2.39073046\times10^{-1}$ & $1.28600823\times10^{-3}$ & 0.165448773 & 0.629960525 & 0.0400459522 & 9.0483919516 & 8.9867082217\\
\bottomrule
\end{tabular}%
}
\end{table}

Table~\ref{tab:2d-stagewise} shows that the trigger value is attained to the resolution of the event interpolation and that the physical time increments decrease rapidly as $A_m^3$ decreases. The columns $E_{\rm start}$ and $E_{\rm end}$ show the energy decrease within each fixed stage for this reference implementation. The recorded stage-to-stage energy jumps $E_{\rm start}^{m+1}-E_{\rm end}^{m}$ are negative in this run, namely approximately $-0.39022555$, $-0.17379388$, and $-0.08087478$. However, these are jumps between recorded fixed-stage energies, not the ideal switch defects of Section~4, because the ideal-transfer energies and the boundary-layer contribution were not logged separately.

\begin{table}[htbp]
\centering
\caption{Feedback diagnostics reconstructed from the same two-dimensional full-domain stagewise reference computation. The data show that the feedback remains finite over the reported stages, but they do not prove convergence to a limiting constant.}
\label{tab:2d-feedback}
\begin{tabular}{ccccc}
\toprule
stage $m$ & $K_{\rm start}$ & $K_{\rm end}$ & $\lambda K_{\rm start}^{-2}$ & $\lambda K_{\rm end}^{-2}$ \\
\midrule
0 & 2.0058835332 & 2.2215508842 & 4.9707116366 & 4.0524481366 \\
1 & 2.3246248742 & 2.4192042689 & 3.7010438831 & 3.4173142322 \\
2 & 2.4728313793 & 2.5409465219 & 3.2707020972 & 3.0976970784 \\
3 & 2.5682440201 & 2.6042191097 & 3.0321970753 & 2.9490012240 \\
\bottomrule
\end{tabular}
\end{table}

The variation of $K$ becomes smaller in the later stages of Table~\ref{tab:2d-feedback}, and the effective coefficient $\lambda K^{-2}$ remains bounded and positive. This is consistent with the asymptotically constant-feedback scope of the paper, but it is only a short-run diagnostic rather than a numerical proof of that asymptotic regime.

\subsection{Two-dimensional direct energy check}

As a fixed-domain check, we take $\lambda=15$, $N=15$, $\Delta t=5\times10^{-4}$, final time $T=0.08$, and
\[
u_0(x,y)=0.45\sin(\pi x)\sin(\pi y).
\]
The same energy convention as above gives
\[
E_h(0)=7.545273587988,
\qquad
E_h(0.08)=7.456582304139.
\]
The final defect is $\min v=0.362574574560$, equivalently $\|u\|_\infty=0.637425425440$ at $T=0.08$. This single computation is included only as a reference check for energy decay; a systematic convergence study or a numerical verification of the switch defect would require additional runs designed for that purpose.

In summary, the reference computations support the fixed-stage energy-decay mechanism and the geometric accumulation of physical time, and the reconstructed feedback table confirms finite feedback over the reported stages. They do not test the bounded-window continuation criterion of Section~5, they do not provide an a posteriori measurement of the switch defect, and the short feedback table is not by itself a proof of convergence to the asymptotically constant-feedback regime.

\section{Conclusion}

We have reorganized the stagewise rescaling algorithm for a two-dimensional nonlocal MEMS equation around the asymptotically constant-feedback touchdown regime, fixed-stage energy dissipation, stage-transition defect accounting, two-dimensional reference computations, and a defect-aware continuation criterion. The main conclusions are as follows.
\begin{enumerate}[label=(\arabic*)]
\item The $A^{3/2}$--$A^3$ scaling is not an exact symmetry of the nonlocal deficit equation. It is a dominant-balance scaling appropriate when the reciprocal-integral feedback converges to a finite positive limit, as in Duong--Zaag type isolated touchdown profiles.
\item Under fixed-stage rescaling, the continuous energy dissipation law is preserved, and the minimizing-movement discretization yields a fixed-stage discrete energy inequality.
\item The 12-point prolongation is globally $C^0$ across interior coarse-cell interfaces, so the real difficulty is not patch ambiguity on coarse-cell edges.
\item Exact Laplace compatibility is used only at fine nodes whose centered second-difference stencil lies inside a single coarse cell; the contributions from interface-crossing stencils and from the computational boundary are absorbed into the switch defect.
\item In a finite-window implementation it is natural to separate the energy box $Q_{\rm en}$ from the buffered interpolation box $Q_{\rm int}$, thereby avoiding ghost values and one-sided stencils in the main formulation.
\item If desired, one may insert an energy-repair projection on a boundary/interface strip at stage transitions, and this repair does not increase the ideal next-stage energy.
\item As a consequence, one obtains an exact defect balance involving the switch defect and the outer defect, and, under a conditional switch-defect bound, a quantitative almost monotonicity estimate.
\item Section~5 provides not a finite-time quenching theorem but a defect-aware criterion for the nonexistence of a global admissible continuation on uniformly bounded energy boxes.
\end{enumerate}

The main remaining tasks are an unconditional estimate of the switch defect, an a posteriori verification based on new runs that explicitly record the ideal-transfer switch-energy diagnostics described in Section~6, and longer feedback diagnostics confirming that $K(t)$ approaches a finite positive limit in the computed regime. The reference full-domain computations are intentionally illustrative and do not cover the bounded-window hypothesis required in Section~5.

\section*{Acknowledgments}
The authors would like to express their sincere gratitude to Professors Hatem Zaag and Hiroyuki Takamura, and Dr. Maiss\^a Boughrara for their valuable advice on this research. This work was supported by JSPS KAKENHI Grant Numbers JP23K22408 and JP24K06819, and by the DAIGAKUTOKUBETSU KENKYUHI Grant (Musashino University).

\appendix

\section{Detailed Calculations}\label{app:details}

This appendix collects calculations omitted from the main text.

\subsection{Complete derivation of the fixed-stage transformation}

Let
\[
x=x_*+A^{3/2}\xi,\qquad t=t_*+A^3s,\qquad v(x,t)=AW(\xi,s),
\]
and assume that $A$ is fixed within the stage. The time derivative is
\[
v_t
=
A\frac{\partial W}{\partial s}\frac{\partial s}{\partial t}
=
AW_sA^{-3}
=
A^{-2}W_s.
\]
The spatial derivative is
\[
\nabla_x v
=
A(\nabla_\xi W)\frac{\partial \xi}{\partial x}
=
AA^{-3/2}\nabla_\xi W
=
A^{-1/2}\nabla_\xi W,
\]
hence
\[
\Delta_x v
=
\nabla_x\cdot(A^{-1/2}\nabla_\xi W)
=
A^{-2}\Delta_\xi W.
\]
The nonlocal term transforms as
\[
\int_\Omega v^{-1}\,dx
=
\int_{\Omega_A}(AW)^{-1}A^3\,d\xi
=
A^2\int_{\Omega_A}W^{-1}\,d\xi.
\]
Substituting these identities into
\[
v_t=\Delta v-\frac{\lambda}{v^2K(v)^2}
\]
gives
\[
A^{-2}W_s
=
A^{-2}\Delta_\xi W
-\frac{\lambda}{A^2W^2K_A(W)^2},
\]
and multiplication by $A^2$ yields the rescaled equation in Section~2.

\subsection{Variation of the discrete energy}

Let
\[
K_{h_m,m}(Y)
=
1+I_{out,m}^{h_m}
+A_m^2h_m^2\sum_{Q_{{\rm en},h_m}^\circ}Y_{ij}^{-1}.
\]
For a perturbation $Y+\varepsilon\Phi$,
\[
\frac{d}{d\varepsilon}
K_{h_m,m}(Y+\varepsilon\Phi)\Big|_{\varepsilon=0}
=
-A_m^2h_m^2\sum_{Q_{{\rm en},h_m}^\circ}Y_{ij}^{-2}\Phi_{ij}
=
-A_m^2(Y^{-2},\Phi)_{2,h_m}.
\]
Hence
\[
\frac{d}{d\varepsilon}
\frac{\lambda}{K_{h_m,m}(Y+\varepsilon\Phi)}
\Big|_{\varepsilon=0}
=
\lambda K_{h_m,m}(Y)^{-2}A_m^2(Y^{-2},\Phi)_{2,h_m}.
\]
Moreover,
\[
\frac{d}{d\varepsilon}
\frac{A_m^2}{2}\|\nabla_{h_m}(Y+\varepsilon\Phi)\|_{2,h_m}^2
\Big|_{\varepsilon=0}
=
A_m^2(\nabla_{h_m}Y,\nabla_{h_m}\Phi)_{2,h_m}
=
A_m^2(-\Delta_{h_m}Y,\Phi)_{2,h_m}.
\]
Therefore
\[
DE_{h_m,m}(Y)[\Phi]
=
A_m^2(-\Delta_{h_m}Y,\Phi)_{2,h_m}
+\lambda A_m^2K_{h_m,m}(Y)^{-2}(Y^{-2},\Phi)_{2,h_m}.
\]
The first-order condition for minimizing movement,
\[
DE_{h_m,m}(Y)[\Phi]
+\frac{A_m^2}{\Delta s}(Y-Z_m^j,\Phi)_{2,h_m}=0,
\]
gives
\[
\frac{Y-Z_m^j}{\Delta s}
=
\Delta_{h_m}Y-\frac{\lambda}{Y^2K_{h_m,m}(Y)^2}.
\]

\subsection{Hessian estimate for the admissible branch}

Define
\[
J_m(Y)
=
E_{h_m,m}(Y)+\frac{A_m^2}{2\Delta s}\|Y-Z_m^j\|_{2,h_m}^2.
\]
In the region $Y_{ij}\ge \eta/2$ one has
\[
D^2K_{h_m,m}(Y)[\Phi,\Phi]
=
2A_m^2h_m^2\sum_{Q_{{\rm en},h_m}^\circ}Y_{ij}^{-3}\Phi_{ij}^2
\le
16A_m^2\eta^{-3}\|\Phi\|_{2,h_m}^2.
\]
Furthermore,
\[
D^2\!\left(\frac{\lambda}{K_{h_m,m}(Y)}\right)[\Phi,\Phi]
=
2\lambda K_{h_m,m}(Y)^{-3}(DK_{h_m,m}(Y)[\Phi])^2
-\lambda K_{h_m,m}(Y)^{-2}D^2K_{h_m,m}(Y)[\Phi,\Phi].
\]
The first term is nonnegative, and $K_{h_m,m}(Y)\ge 1$, so
\[
D^2E_{h_m,m}(Y)[\Phi,\Phi]
\ge
-16\lambda A_m^2\eta^{-3}\|\Phi\|_{2,h_m}^2.
\]
Hence
\[
D^2J_m(Y)[\Phi,\Phi]
\ge
A_m^2\left(\frac{1}{\Delta s}-\frac{16\lambda}{\eta^3}\right)
\|\Phi\|_{2,h_m}^2.
\]
In particular, if $\Delta s<\eta^3/(16\lambda)$, then $J_m$ is strictly convex in a neighborhood of the positive region.

\subsection{Ideal full-domain computation across a stage switch}

Consider the ideal full-domain transfer
\[
\widehat Z(\xi):=k^{2/3}Z(\xi/k).
\]
With $\eta=\xi/k$, we have
\[
\widehat Z^{-1}(\xi)=k^{-2/3}Z(\eta)^{-1},
\qquad
d\xi=k^2\,d\eta.
\]
Therefore
\[
A_{m+1}^2\int_Q \widehat Z^{-1}(\xi)\,d\xi
=
k^{-4/3}A_m^2\int_Q k^{-2/3}Z(\xi/k)^{-1}\,d\xi
=
A_m^2\int_{Q/k}Z(\eta)^{-1}\,d\eta.
\]
Also,
\[
\nabla \widehat Z(\xi)
=
k^{2/3}k^{-1}\nabla Z(\xi/k)
=
k^{-1/3}\nabla Z(\eta),
\]
and hence
\[
\frac{A_{m+1}^2}{2}\int_Q|\nabla \widehat Z|^2\,d\xi
=
\frac{k^{-4/3}A_m^2}{2}\int_Q k^{-2/3}|\nabla Z(\xi/k)|^2\,d\xi
=
\frac{A_m^2}{2}\int_{Q/k}|\nabla Z(\eta)|^2\,d\eta.
\]
Thus, at the continuous full-domain level, the stage switch is an exact change of variables from the energy viewpoint. The discrete defect appears only after remeshing/interpolation, interface-crossing stencils, and any boundary correction that may be applied.

\subsection{Laplacian of the 12-point interpolation}

For
\[
\begin{aligned}
P_{ij}(x,y)=\,&
A_0+A_1\theta+A_2\zeta+A_3\theta^2+A_4\theta\zeta+A_5\zeta^2\\
&+A_6\theta^3+A_7\theta^2\zeta+A_8\theta\zeta^2+A_9\zeta^3
+A_{10}\theta^3\zeta+A_{11}\theta\zeta^3,
\end{aligned}
\]
we compute
\[
P_{\theta\theta}
=
2A_3+6A_6\theta+2A_7\zeta+6A_{10}\theta\zeta,
\]
and
\[
P_{\zeta\zeta}
=
2A_5+2A_8\theta+6A_9\zeta+6A_{11}\theta\zeta.
\]
Hence
\[
\Delta P_{ij}
=
\frac{1}{h_m^2}(P_{\theta\theta}+P_{\zeta\zeta})
=
\frac{1}{h_m^2}
\Bigl(
2(A_3+A_5)
+(6A_6+2A_8)\theta
+(2A_7+6A_9)\zeta
+6(A_{10}+A_{11})\theta\zeta
\Bigr),
\]
so $\Delta P_{ij}$ is bilinear in the local variables.

\section{Stability and Consistency Implied by the Interpolation Assumption}\label{app:interp}

Here we summarize what follows from the uniform rescaled regularity assumption in Section~4. These estimates provide the input needed to keep the switch defect small, but they do \emph{not} by themselves prove the global bound
\[
\varepsilon_{m,h_m}^{sw}\le C_{sw}A_m^2h_m^2
\]
unconditionally. In particular, the contributions from interface-crossing stencils and from the boundary/interface strip are absorbed into the switch defect in the main theorem.

\begin{lemma}[Stability of the 12-point interpolation operator]
Let $\mathcal P_{h_m}$ denote the 12-point interpolation operator on the reference cell. Then $\mathcal P_{h_m}$ is linear and bounded: there exists $C_I>0$ such that
\[
\|\mathcal P_{h_m}W_h\|_{L^\infty(\mathrm{cell})}
\le
C_I\max_{(a,b)\in S_{12}}|W_{a,b}|.
\]
\end{lemma}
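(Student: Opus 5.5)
The plan is to write the interpolation operator explicitly in a Lagrange basis and bound that basis uniformly on the reference cell. By the unisolvence lemma for the 12-point interpolation, the linear map from the data vector $(W_{a,b})_{(a,b)\in S_{12}}\in\mathbb R^{12}$ to the coefficient vector $(A_0,\dots,A_{11})$ is well defined. It is the inverse of the $12\times 12$ collocation (Vandermonde-type) matrix $V$, whose rows are the twelve monomials $1,\theta,\zeta,\dots,\theta\zeta^3$ evaluated at the stencil points of $S_{12}$. Unisolvence says precisely that $V$ is invertible. Linearity of $\mathcal P_{h_m}$ is then immediate, because the coefficients are $V^{-1}$ applied to the data and $P$ depends linearly on the coefficients.

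For boundedness I would introduce the Lagrange basis $\{\ell_{a,b}\}_{(a,b)\in S_{12}}$ of the 12-dimensional interpolation space. It is characterized by $\ell_{a,b}(a',b')=\delta_{aa'}\delta_{bb'}$, and its coefficients are the columns of $V^{-1}$. Then
\[
\mathcal P_{h_m}W_h(\theta,\zeta)=\sum_{(a,b)\in S_{12}}W_{a,b}\,\ell_{a,b}(\theta,\zeta),
\]
and hence
\[
\|\mathcal P_{h_m}W_h\|_{L^\infty(\mathrm{cell})}\le \Lambda\max_{(a,b)\in S_{12}}|W_{a,b}|,
\qquad
\Lambda:=\sup_{(\theta,\zeta)\in[0,1]^2}\sum_{(a,b)\in S_{12}}|\ell_{a,b}(\theta,\zeta)|.
\]
The quantity $\Lambda$ is finite because it is the supremum of a continuous function over the compact set $[0,1]^2$. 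I would take $C_I:=\Lambda$.

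The key point, and the only place where care is needed, is that $C_I$ is independent of $h_m$, of the cell index $(i,j)$, and of the stage $m$. This holds because everything is expressed in the local variables $\theta=(x-x_i)/h_m$ and $\zeta=(y-y_j)/h_m$. The matrix $V$ and the basis $\ell_{a,b}$ depend only on $S_{12}$ and the monomial list, not on the mesh, so the reference cell is literally the same for every cell and every stage. The factor $k^{2/3}$ in the prolongation is a separate fixed scalar and is not part of $\mathcal P_{h_m}$.

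If an explicit constant is wanted, the obstacle is computing $V^{-1}$ by hand. I would avoid this by bounding crudely: $|\ell_{a,b}|\le\|V^{-1}\|_{\infty}\cdot 12$ on $[0,1]^2$, since every monomial is bounded by $1$ there. For the statement as posed, however, finiteness of $\Lambda$ is all that is required.
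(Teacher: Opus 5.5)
Your proof is correct and follows essentially the same route as the paper. Unisolvence gives a fixed invertible collocation matrix, so the interpolant depends linearly on the data. A finite-dimensional bound on the reference cell $[0,1]^2$ then gives $C_I$ independent of $h_m$, the cell, and the stage; your Lebesgue constant $\Lambda$ is simply an explicit form of the paper's norm-equivalence argument.
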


\begin{proof}
By unisolvence in Section~4, the coefficients on the reference cell are given linearly by a fixed matrix $M^{-1}$ acting on the data vector. Finite-dimensional norm equivalence then bounds both the coefficients and the polynomial values by the maximum norm of the data.
\end{proof}

\begin{lemma}[Consistency for smooth profiles]
Let $Q_{\rm en}\Subset Q_{\rm int}$ and $W\in C^4(Q_{\rm int})$. Construct the 12-point interpolation on the coarse cells contained in $Q_{\rm en}$. Then there exists a constant $C>0$ such that
\[
\|\mathcal P_{h_m}I_{h_m}W-W\|_{L^\infty(\mathrm{cell})}
\le
Ch_m^4\|W\|_{C^4(\omega_{ij})},
\]
and, at fine nodes whose centered Laplacian stencil lies inside a single coarse cell,
\[
\|\Delta \mathcal P_{h_m}I_{h_m}W-\Delta W\|_{L^\infty(\mathrm{cell})}
\le
Ch_m^2\|W\|_{C^4(\omega_{ij})}.
\]
\end{lemma}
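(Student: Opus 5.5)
The plan is to use the standard polynomial-reproduction argument (Bramble--Hilbert type) on the reference cell, followed by a scaling argument.

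\textbf{Reproduction.} First I check which polynomials the interpolation space $\mathcal V:=\mathrm{span}\{1,\theta,\zeta,\dots,\theta^3\zeta,\theta\zeta^3\}$ contains. It contains all of $\mathbb P_3$ (total degree $\le 3$), so by unisolvence (the 12-point unisolvence lemma) the operator $\mathcal P_{h_m}I_{h_m}$ reproduces every cubic exactly on each cell.

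\textbf{Value estimate.} Fix a coarse cell and let $\omega_{ij}$ be the patch spanned by the stencil $S_{12}$. Since $Q_{\rm en}\Subset Q_{\rm int}$ and $h_m$ is small, $\omega_{ij}\subset Q_{\rm int}$. Let $T_3$ be the cubic Taylor polynomial of $W$ at the cell corner. Then
\[
\mathcal P_{h_m}I_{h_m}W-W=\mathcal P_{h_m}I_{h_m}(W-T_3)-(W-T_3).
\]
The remainder satisfies $\|W-T_3\|_{L^\infty(\omega_{ij})}\le Ch_m^4\|W\|_{C^4(\omega_{ij})}$ because $\mathrm{diam}\,\omega_{ij}\lesssim h_m$. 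Applying the stability lemma (constant $C_I$, independent of $h_m$ since everything is computed in the reference variables $(\theta,\zeta)$) bounds the first term by $C_ICh_m^4\|W\|_{C^4}$. This gives the first estimate.

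\textbf{Laplacian estimate.} I use the same splitting for second derivatives:
\[
\Delta\mathcal P I W-\Delta W=\Delta\mathcal P I(W-T_3)-\Delta(W-T_3).
\]
\begin{itemize}
\item The second term is bounded by $Ch_m^2\|W\|_{C^4}$ directly from Taylor's theorem.
\item For the first term, the map from data to $\partial_\theta^2,\partial_\zeta^2$ of the interpolant is a fixed finite-dimensional linear map on the reference cell. By norm equivalence, $|\Delta_{(\theta,\zeta)}\mathcal P\,d|\le C'\max|d|$. Converting to physical variables gives a factor $h_m^{-2}$, so this term is at most $C'h_m^{-2}\cdot Ch_m^4\|W\|_{C^4}=O(h_m^2)$.
\end{itemize}
At fine nodes whose centered stencil lies in one cell, the earlier Local Laplace compatibility lemma identifies the discrete centered Laplacian with $\Delta P$ exactly. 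The bound therefore transfers to the discrete quantity.

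\textbf{Main obstacle.} The step needing the most care is the uniformity of the constants. One must confirm that the reference-cell matrix $M^{-1}$ and the derivative functionals are independent of $h_m$, $m$, and the cell. One must also confirm that the patch $\omega_{ij}$ stays inside $Q_{\rm int}$, which is exactly where the buffered-box hypothesis is used. Both hold because the affine map to $(\theta,\zeta)$ is a pure translation and dilation by $h_m$.
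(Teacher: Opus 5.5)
Your proposal is correct and follows essentially the same route as the paper: exact reproduction of total-degree-three polynomials in $\mathcal V_{12}$, the Taylor splitting $\mathcal P_{h_m}I_{h_m}W-W=\mathcal P_{h_m}I_{h_m}(W-T_3W)-(W-T_3W)$ combined with the stability lemma for the values, and an $h_m^{-2}$ scaling of reference-cell second derivatives for the Laplacian, with Laplace compatibility invoked only at single-cell stencils. One small imprecision: the compatibility lemma gives $\Delta_{h_{m+1}}\widehat P=k^{-4/3}(\Delta P)(\xi/k,\eta/k)$ for the rescaled prolongation rather than literally $\Delta P$, which does not affect the estimate.
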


\begin{proof}
Let
\[
\mathcal V_{12}
:=
\mathrm{span}\{1,\theta,\zeta,\theta^2,\theta\zeta,\zeta^2,
\theta^3,\theta^2\zeta,\theta\zeta^2,\zeta^3,\theta^3\zeta,\theta\zeta^3\}.
\]
This space is unisolvent for the 12-point data and contains every bivariate polynomial of \emph{total degree at most three}. It is not the full tensor-product cubic space, since for example $\theta^2\zeta^2\notin \mathcal V_{12}$.

Let $T_3W$ be the third-order Taylor polynomial of $W$ about a point in the cell, written in physical variables. Because $T_3W$ has total degree at most three, the interpolation operator reproduces it exactly:
\[
\mathcal P_{h_m}I_{h_m}(T_3W)=T_3W.
\]
Therefore
\[
\mathcal P_{h_m}I_{h_m}W-W
=\mathcal P_{h_m}I_{h_m}(W-T_3W)-(W-T_3W).
\]
The $C^4$ Taylor remainder on the fixed 12-point stencil is bounded by $Ch_m^4\|W\|_{C^4(\omega_{ij})}$, and the stability lemma transfers this bound through $\mathcal P_{h_m}$. This gives the stated $L^\infty$ interpolation error.

For the Laplacian estimate, the same reproduction identity gives
\[
\Delta\bigl(\mathcal P_{h_m}I_{h_m}W-W\bigr)
=\Delta\mathcal P_{h_m}I_{h_m}(W-T_3W)-\Delta(W-T_3W).
\]
Taking two derivatives of a cell polynomial introduces the factor $h_m^{-2}$ relative to the reference-cell coefficients. Hence the $O(h_m^4)$ remainder data yield an $O(h_m^2)$ bound after applying $\Delta$. The term $\Delta(W-T_3W)$ is also $O(h_m^2)\|W\|_{C^4(\omega_{ij})}$. Exact Laplace compatibility is invoked only at fine nodes whose centered stencil stays inside a single coarse cell; interface-crossing stencil contributions are not included in this local consistency estimate.
\end{proof}

\begin{proposition}[Bounds for the prolongation under the uniform regularity assumption]\label{prop:prolongation-bounds}
Assume the uniform rescaled regularity assumption of Section~4. Then, for sufficiently small $h_m$, there exist constants $c_0,C_0,C_4>0$, independent of $m$, such that on the coarse cells of $Q_{\rm en}$,
\[
c_0\le \mathcal P_{h_m}Z_m^{J_m}\le C_0,
\qquad
\|\mathcal P_{h_m}Z_m^{J_m}\|_{C_{\rm pw}^4(Q_{\rm en})}\le C_4.
\]
\end{proposition}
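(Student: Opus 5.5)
We split the prolongation as
\[
\mathcal P_{h_m}Z_m^{J_m}
=\mathcal P_{h_m}I_{h_m}W_m^*+\mathcal P_{h_m}\bigl(Z_m^{J_m}-I_{h_m}W_m^*\bigr),
\]
and use linearity of $\mathcal P_{h_m}$ (from unisolvence) together with the uniform rescaled regularity assumption. The first term is controlled by the consistency lemma for smooth profiles: on each coarse cell of $Q_{\rm en}$,
\[
\|\mathcal P_{h_m}I_{h_m}W_m^*-W_m^*\|_{L^\infty({\rm cell})}\le Ch_m^4\|W_m^*\|_{C^4(Q_{\rm int})}\le CC_*h_m^4 .
\]
The stencil patch $\omega_{ij}$ lies in $Q_{\rm int}$ because $Q_{\rm en}\Subset Q_{\rm int}$ and, for $h_m$ small, one coarse layer fits into the buffer. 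The second term is controlled by the stability lemma, giving the bound $C_I\varepsilon_{h_m}$. Hence
\[
\bigl|\mathcal P_{h_m}Z_m^{J_m}-W_m^*\bigr|\le CC_*h_m^4+C_I\varepsilon_{h_m}.
\]
Since $c_*\le W_m^*\le C_*$, choosing $h_m$ small enough that $CC_*h_m^4+C_I\varepsilon_{h_m}\le c_*/2$ yields the two-sided bound with $c_0=c_*/2$ and $C_0=C_*+c_*/2$. Both constants are independent of $m$, because every constant involved depends only on $C_*$, $c_*$ and the fixed reference matrix $M^{-1}$.

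For the piecewise $C^4$ bound we argue cell by cell. On a cell, $P_{ij}$ is a fixed linear combination of the twelve data values. Equivalently, its coefficients in the scaled variables $(\theta,\zeta)$ are given by the action of $M^{-1}$ on the data. Differentiating with respect to $(x,y)$ up to order $r\le4$ produces $h_m^{-r}$ times reference derivatives. Naively this blows up, so we rewrite the coefficients in terms of discrete differences instead of raw values. The approach is as follows.

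\begin{itemize}
\item Because $\mathcal V_{12}$ reproduces all polynomials of total degree at most three, one can change basis so that each coefficient of $(\theta,\zeta)$-degree $r$ is a combination of $r$-th order and higher discrete differences of the data, scaled by $h_m^{r}$. Here $r$-th order differences are $h_m^{r}D^r$, where $D^r$ are the difference quotients.
\item In physical variables, $\partial^\alpha P_{ij}$ is then bounded by a combination of the discrete difference quotients of orders $|\alpha|$ through four on the stencil patch, times powers of $h_m\le1$.
\item The assumption supplies a uniform bound on these difference quotients of $Z_m^{J_m}$ up to order four on every patch in $Q_{\rm int}$ used for $Q_{\rm en}$.
\end{itemize}

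Derivatives of order above four vanish except where they are dominated by the same quantities. Summing gives $\|P_{ij}\|_{C^4({\rm cell})}\le C_4$ uniformly, and taking the maximum over cells gives the $C_{\rm pw}^4$ bound.

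The main obstacle is this last step: making precise that the coefficient map of $\mathcal P_{h_m}$ factors through discrete differences. This requires showing that applying $\partial^\alpha$ to the interpolant annihilates the data components corresponding to lower-order polynomials. I would first try the reproduction property: write the data as discrete Taylor (Newton-type) expansions about a cell corner, so that the divided differences of order at most three reproduce degree-at-most-three polynomials exactly. The remaining mixed terms $\theta^3\zeta$ and $\theta\zeta^3$ correspond to fourth-order mixed differences. If this factorization proves awkward, a fallback is the estimate
\[
|\partial^\alpha(\mathcal P_{h_m}Z-W_m^*)|\lesssim h_m^{-|\alpha|}\bigl(h_m^4+\varepsilon_{h_m}\bigr).
\]
This gives the $C^4$ bound only if $\varepsilon_{h_m}=O(h_m^4)$, so the difference-quotient route is preferable.
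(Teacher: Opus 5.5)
Your proposal is correct and follows the paper's proof. It uses the same splitting $\mathcal P_{h_m}I_{h_m}W_m^*+\mathcal P_{h_m}(Z_m^{J_m}-I_{h_m}W_m^*)$, with consistency and stability giving the two-sided bound, and the coefficient representation together with the assumed bounds on discrete difference quotients giving the piecewise $C^4$ bound. Your elaboration of that last step is more explicit than the paper's one-line appeal, and it does close: on $S_{12}$ the kernel of the available $r$-th order differences is exactly the set of polynomials of total degree below $r$ restricted to $S_{12}$ (for $r=4$ the only fourth differences that fit are the mixed $(3,1)$ and $(1,3)$ ones, which equal $6A_{10}$ and $6A_{11}$).
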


\begin{proof}
Because the interpolation of cells in $Q_{\rm en}$ uses a one-layer stencil, all needed data lie in $Q_{\rm int}$ under the assumption $Q_{\rm en}\Subset Q_{\rm int}$. We decompose
\[
\mathcal P_{h_m}Z_m^{J_m}
=
\mathcal P_{h_m}I_{h_m}W_m^*
+
\mathcal P_{h_m}\bigl(Z_m^{J_m}-I_{h_m}W_m^*\bigr).
\]
The first term is uniformly close to $W_m^*$ on the coarse cells of $Q_{\rm en}$ by the consistency estimate and the uniform $C^4$ bound on $W_m^*$ over $Q_{\rm int}$. The second term is $O(\varepsilon_{h_m})$ by the stability of the interpolation operator together with
\[
\|Z_m^{J_m}-I_{h_m}W_m^*\|_{\ell^\infty(Q_{{\rm int},h_m})}\le \varepsilon_{h_m}.
\]
If $h_m$ and $\varepsilon_{h_m}$ are sufficiently small, this yields uniform positive lower and upper bounds on $Q_{\rm en}$. The piecewise $C^4$ bound follows from the coefficient representation together with the assumed uniform control of discrete differences on the stencil patches contained in $Q_{\rm int}$.
\end{proof}

\begin{remark}
The previous lemmas quantify the raw interpolation error in the single-cell interior. To turn them into a full theorem on the switch defect one would still need to control, in a unified way, the contribution of interface-crossing stencils, the boundary/interface strip near the computational boundary, and, if present, the effect of the energy-repair projection. This is why the bound
\[
\varepsilon_{m,h_m}^{sw}\le C_{sw}A_m^2h_m^2
\]
remains a conditional assumption in the main text.
\end{remark}

\section{Solvability and Local Uniqueness of the Admissible Branch}\label{app:admissible}

The minimizing-movement step may admit several algebraic branches. This appendix shows that, under a sufficiently small time step, there exists an energy-dissipating branch that stays positive and is locally unique.

\begin{theorem}[Solvability and local uniqueness for small time steps]
Let the previous step on the fixed stage $m$ satisfy
\[
Z_{m,ij}^j\ge \eta>0.
\]
Assume moreover that
\[
\Delta s
<
\Delta s_m^*
:=
\min\left\{
\frac{A_m^2h_m^2\eta^2}{8E_{h_m,m}(Z_m^j)},
\frac{\eta^3}{16\lambda}
\right\}.
\]
Then the minimizing-movement functional
\[
J_m(Y)=E_{h_m,m}(Y)+\frac{A_m^2}{2\Delta s}\|Y-Z_m^j\|_{2,h_m}^2
\]
admits a minimizer over $\{Y\ge 0\}$, and every such minimizer $Z_m^{j+1}$ satisfies
\[
Z_{m,ij}^{j+1}\ge \eta/2>0.
\]
Hence it solves the Euler--Lagrange equation
\[
\frac{Z_{m,ij}^{j+1}-Z_{m,ij}^j}{\Delta s}
=
\Delta_{h_m}Z_{m,ij}^{j+1}
-
\frac{\lambda}{(Z_{m,ij}^{j+1})^2K_{h_m,m}(Z_m^{j+1})^2}.
\]
Furthermore, this solution is unique in a neighborhood of $Z_m^j$.
\end{theorem}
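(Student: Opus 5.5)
The plan is to argue in finite dimensions, using only the competitor inequality, a pointwise control coming from the discrete $L^2$ norm, and the Hessian estimate of Appendix~\ref{app:details}.

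\emph{Existence.} The admissible set $\{Y\ge 0\}$ is a closed cone in the finite-dimensional space of interior grid functions. I would first check that $J_m$ is lower semicontinuous on this cone. The Dirichlet term is a continuous quadratic. For the reciprocal term, let $Y_{ij}\downarrow 0$ at some node. Then $K_{h_m,m}(Y)\to+\infty$, so $\lambda/K_{h_m,m}(Y)\to 0$. This matches the convention $\lambda/K:=0$ on the vanishing branch, so $Y\mapsto\lambda/K_{h_m,m}(Y)$ is in fact continuous on $\{Y\ge 0\}$ with values in $[0,\lambda]$. Next, $J_m$ is coercive, since $E_{h_m,m}\ge 0$ and the penalty $\frac{A_m^2}{2\Delta s}\|Y-Z_m^j\|_{2,h_m}^2$ dominates. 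A minimizing sequence therefore stays in a compact sublevel set, and a minimizer exists.

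\emph{Positivity.} Let $Z_m^{j+1}$ be any minimizer. Comparing with the competitor $Y=Z_m^j$ and using $E_{h_m,m}\ge 0$ gives
\[
\frac{A_m^2}{2\Delta s}\|Z_m^{j+1}-Z_m^j\|_{2,h_m}^2\le E_{h_m,m}(Z_m^j).
\]
Note that $E_{h_m,m}(Z_m^j)>0$, because $\lambda/K>0$ on positive profiles, so $\Delta s_m^*$ is well defined. For each interior node $p$ we have $h_m^2|Z^{j+1}_{m,p}-Z^j_{m,p}|^2\le\|Z_m^{j+1}-Z_m^j\|_{2,h_m}^2$. Combining,
\[
|Z^{j+1}_{m,p}-Z^j_{m,p}|^2\le \frac{2\Delta s\,E_{h_m,m}(Z_m^j)}{A_m^2h_m^2}<\frac{\eta^2}{4}
\]
by the first entry of $\Delta s_m^*$. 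Hence $Z^{j+1}_{m,p}>\eta-\eta/2=\eta/2$. The minimizer therefore lies in the interior of the cone, where $E_{h_m,m}$ is smooth. The first-variation computation in Appendix~\ref{app:details}, together with the discrete Green identity, then yields the Euler--Lagrange equation, since all directions $\Phi$ are admissible at an interior minimum.

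\emph{Local uniqueness.} Let $\mathcal N$ be the set of $Y$ with $\|Y-Z_m^j\|_{\ell^\infty}<\eta/2$. This set is convex, lies inside $\{Y\ge\eta/2\}$, and contains every minimizer by the previous step. On $\mathcal N$ the Hessian estimate of Appendix~\ref{app:details} applies; there $Y_{ij}^{-3}\le 8\eta^{-3}$, which is where the factor $16$ comes from. It gives
\[
D^2J_m(Y)[\Phi,\Phi]\ge A_m^2\Bigl(\frac1{\Delta s}-\frac{16\lambda}{\eta^3}\Bigr)\|\Phi\|_{2,h_m}^2>0
\]
by the second entry of $\Delta s_m^*$. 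Thus $J_m$ is strictly convex on the convex set $\mathcal N$. A strictly convex function has at most one critical point on a convex set. Consequently the minimizer is unique, and any solution of the Euler--Lagrange equation in $\mathcal N$ coincides with it.

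The step I expect to need the most care is the bookkeeping here. One must make sure that the neighborhood in which uniqueness is claimed is exactly the convex $\ell^\infty$-ball where the Hessian lower bound is valid. One must also check that the Hessian estimate, derived in Appendix~\ref{app:details} for $Y\ge\eta/2$, uses only $K_{h_m,m}\ge 1$ and the sign of the rank-one term $2\lambda K^{-3}(DK[\Phi])^2$. Both hold on $\mathcal N$ because $I_{out,m}^{h_m}\ge 0$.
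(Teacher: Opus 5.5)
Your proposal is correct and follows the paper's proof essentially step for step. Both arguments get existence from coercivity and finite dimensionality on the closed cone, and positivity from the competitor inequality combined with $\|R\|_{\ell^\infty}\le h_m^{-1}\|R\|_{2,h_m}$. Both then obtain the Euler--Lagrange equation at the resulting interior minimizer, and local uniqueness from the Hessian lower bound $A_m^2(1/\Delta s-16\lambda/\eta^3)\|\Phi\|_{2,h_m}^2$ on the region $Y\ge\eta/2$.

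Your additions make explicit what the paper leaves implicit: continuity of $\lambda/K_{h_m,m}$ on $\{Y\ge 0\}$, $E_{h_m,m}(Z_m^j)>0$ so that $\Delta s_m^*$ is well defined, and the explicit convex $\ell^\infty$-ball $\mathcal N$ containing all minimizers. As a bonus, they yield uniqueness of the minimizer itself, not just local uniqueness of the branch.
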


\begin{proof}
Consider $J_m$ on the closed convex set $\{Y\ge 0\}$. By the lower-semicontinuous extension adopted in Section~3, the reciprocal term is well defined there, taking the value $0$ on the vanishing branch. The quadratic distance term makes $J_m$ coercive, and finite dimensionality yields a minimizer $Y^*$. By minimality,
\[
E_{h_m,m}(Y^*)+\frac{A_m^2}{2\Delta s}\|Y^*-Z_m^j\|_{2,h_m}^2
\le
E_{h_m,m}(Z_m^j),
\]
so
\[
\|Y^*-Z_m^j\|_{2,h_m}
\le
\sqrt{\frac{2\Delta s}{A_m^2}E_{h_m,m}(Z_m^j)}.
\]
On a two-dimensional grid,
\[
\|R\|_{\ell^\infty(Q_{{\rm en},h_m})}\le h_m^{-1}\|R\|_{2,h_m},
\]
and the condition
\[
\Delta s<\frac{A_m^2h_m^2\eta^2}{8E_{h_m,m}(Z_m^j)}
\]
therefore implies
\[
\|Y^*-Z_m^j\|_{\ell^\infty(Q_{{\rm en},h_m})}<\eta/2.
\]
Thus $Y_{ij}^*\ge \eta/2$, so the minimizer lies in the interior of the positive cone, and the Euler--Lagrange equation follows.

Local uniqueness follows from the Hessian estimate in Appendix~\ref{app:details}. In a neighborhood where $Y\ge \eta/2$,
\[
D^2J_m(Y)[\Phi,\Phi]
\ge
A_m^2\left(\frac{1}{\Delta s}-\frac{16\lambda}{\eta^3}\right)\|\Phi\|_{2,h_m}^2.
\]
If additionally
\[
\Delta s<\frac{\eta^3}{16\lambda},
\]
then $J_m$ is strictly convex in that neighborhood. Hence the admissible branch is locally unique.
\end{proof}

\begin{remark}
What is proved here is the existence and local uniqueness of the admissible branch that is continuously connected to the previous step. We do \emph{not} claim global uniqueness among all positive solutions of the nonlinear implicit equation.
\end{remark}

\end{document}